\documentclass{amsart}

\makeatletter
\@namedef{subjclassname@2020}{%
  \textup{2020} Mathematics Subject Classification}
 
\makeatother 

\usepackage{amsthm,amssymb,amsfonts,latexsym,mathtools,thmtools,amsmath,lscape}
\usepackage[T1]{fontenc}
\usepackage{tikz-cd} 
\usepackage{enumitem} 
\usepackage{longtable}
\usepackage{multirow}
\usepackage{mathrsfs} 
\usepackage{hyperref} 
\usepackage{hyperref}
\usepackage[all]{xy}
\usepackage{booktabs}
\hypersetup{
    colorlinks=true,
    linkcolor=blue,
    filecolor=blue,      
    urlcolor=blue,
    linktocpage=true
}

\newtheorem{theorem}{Theorem}[section]
\newtheorem{lemma}[theorem]{Lemma}
\newtheorem{proposition}[theorem]{Proposition}

\theoremstyle{definition}
\newtheorem{definition}[theorem]{Definition}

\newtheorem{remark}[theorem]{Remark}

\newtheorem{example}[theorem]{Example}

\theoremstyle{remark}

\numberwithin{equation}{section}

\begin{document}

\title[A view toward the smooth geometry of Sklyanin algebras]{A view toward the smooth geometry of Sklyanin algebras}


\author{Karol Herrera}
\address{Universidad ECCI}
\curraddr{Campus Universitario}
\email{kherrerac@ecci.edu.co}
\thanks{}


\author{Sebastián Higuera}
\address{Universidad ECCI}
\curraddr{Campus Universitario}
\email{shiguerar@ecci.edu.co}
\thanks{}


\author{Andr\'es Rubiano}
\address{Universidad ECCI}
\curraddr{Campus Universitario}
\email{arubianos@ecci.edu.co}
\thanks{}

\subjclass[2020]{16E45, 16E65, 16S37, 16S36, 16S38, 58B32, 58B34}

\keywords{Differentially smooth algebra, integrable calculus, skew polynomial ring, three-dimensional Sklyanin algebra, Gelfand-Kirillov dimension, four-dimensional Sklyanin algebra}

\date{}

\dedicatory{Dedicated to Ren\'e Veloza}

\begin{abstract} 

We study the differential smoothness of Sklyanin algebras in three and four variables. We show that all non-degenerate three-dimensional cases are differentially smooth, while none of the four-dimensional Sklyanin algebras admit a connected integrable differential calculus of suitable dimension.

\end{abstract}

\maketitle


\section{Introduction}

The Sklyanin algebras originated in the context of integrable systems and the quantum inverse scattering method. In particular, they were introduced by Sklyanin in the early 1980s as a family of noncommutative algebras associated with solutions to the quantum Yang–Baxter equation \cite{Sklyanin1982}. These algebras provide a deformation of polynomial rings and are closely related to the geometry of elliptic curves and their automorphisms.

The three-dimensional Sklyanin algebra, denoted typically by $S(p,q,r)$, is generated by three variables subject to three quadratic relations depending on complex parameters $(p,q,r)$ satisfying a non-degeneracy condition. These algebras are regular in the sense of Artin and Schelter and serve as fundamental examples in the theory of noncommutative projective geometry.

In the seminal work by Artin, Tate, and Van den Bergh \cite{ArtinTateVandenBergh1990}, it was shown that these algebras define noncommutative analogues of projective planes and are intimately connected with elliptic curves and the corresponding point schemes. The three-dimensional Sklyanin algebras are Artin–Schelter (AS) regular of global dimension 3 and exhibit rich geometric and homological properties.

The four-dimensional Sklyanin algebras were later studied and classified in the work of Smith and Stafford \cite{SmithStafford1992} and are generated by four variables with six homogeneous quadratic relations. Similar to the three-dimensional case, they are connected to elliptic curves and automorphisms, though the geometry becomes more subtle, involving degeneracies and rational degenerations \cite{LevasseurStafford1991}.

Both the three- and four-dimensional Sklyanin algebras have been extensively studied as examples of noncommutative Calabi–Yau algebras, and they play a prominent role in the understanding of Gelfand–Kirillov dimension, deformation theory, and noncommutative smooth geometry.

In \cite{BrzezinskiElKaoutitLomp2010}, Brzezi{\'n}ski and collaborators introduced a method to construct {\em differential calculi} that support the existence of hom-connections. Their framework relies on the concept of {\em twisted multi-derivations}, leading to a first-order differential calculus $\Omega^1(A)$ that is free as both a left and right $A$-module. The module $\Omega^1(A)$ can be interpreted as the analogue of the module of sections of the cotangent bundle on a manifold encoded by the algebra $A$, thereby modeling situations akin to parallelizable manifolds or coordinate patches with trivial bundles. Subsequently, in \cite{Brzezinski2011}, Brzezi{\'n}ski suggested that \textquotedblleft one should expect $\Omega^1(A)$ to be a finitely generated and projective module over $A$ (thus corresponding to sections of a non-trivial vector bundle by the Serre--Swan theorem)\textquotedblright\ \cite[p.~885]{Brzezinski2011}. There, he broadened the earlier construction to include the case where $\Omega^1(A)$ is finitely generated and projective.

In their work \cite{BrzezinskiSitarz2017}, Brzezi{\'n}ski and Sitarz introduced a refined concept of algebraic smoothness, which they called {\em differential smoothness}. This approach is based on the use of differential graded algebras of a given dimension that possess a noncommutative counterpart of the classical Hodge star isomorphism. Specifically, it entails the existence of a top-degree differential form and a version of Poincar\'e duality expressed as an isomorphism between complexes of differential and integral forms. Unlike the previously discussed notion of homological smoothness, this new definition is more constructive and geometric in nature. As explained in \cite[p.~413]{BrzezinskiSitarz2017}, the motivation behind {\em differential smoothness} stems from classical geometry, where every smooth orientable manifold supports not only the de Rham complex but also a complex of {\em integral forms} that is isomorphic to it \cite[Section~4.5]{Manin1997}. In this setting, the de Rham differential can be interpreted as a particular case of a left connection, whereas the corresponding boundary operator in the complex of integral forms exemplifies a right connection.

A wide range of algebraic structures have been examined in the literature concerning their differential smoothness properties (see, for instance, \cite{Brzezinski2015, Brzezinski2016, BrzezinskiElKaoutitLomp2010, BrzezinskiLomp2018, BrzezinskiSitarz2017, DuboisViolette1988, DuboisVioletteKernerMadore1990, Karacuha2015, KaracuhaLomp2014, ReyesSarmiento2022}). These include various quantum analogues of classical spaces such as the quantum two- and three-spheres, the quantum disc and plane, as well as the noncommutative torus. Further examples involve coordinate rings of quantum groups like $SU_q(2)$, the noncommutative pillow and cone algebras, quantum polynomial rings, and several types of Ore extensions and skew polynomial algebras in low dimensions. Studies have also focused on Hopf algebra domains of Gelfand–Kirillov dimension two that do not satisfy a polynomial identity (non-PI), as well as on algebras arising from deformations of classical orbifolds including the pillow, lens spaces, and certain singular cones. Interestingly, a number of these algebras also satisfy homological smoothness in the sense of Van den Bergh.


Recent work by Reyes and the third author has concentrated on establishing sufficient (and in some instances, necessary) criteria for the differential smoothness of various families of noncommutative graded algebras. Among these are bi-quadratic algebras in three generators possessing PBW bases \cite{RubianoReyes2024DSBiquadraticAlgebras}, double Ore extensions characterized by the Hilbert series type $(14641)$ \cite{RubianoReyes2024DSDoubleOreExtensions}, and skew PBW extensions over commutative polynomial rings \cite{RubianoReyes2024DSSPBWKt}. In each case, the smoothness property is investigated through explicit algebra presentations, detailed scrutiny of their defining relations, and the construction of derivations or differential operators satisfying suitable lifting conditions. 

Most recently, in \cite{Rubiano2025DSASRegular5D}, the third author established obstructions to differential smoothness for AS-regular algebras in dimension five with fewer generators than their Gelfand–Kirillov dimension, thus connecting algebraic structure with the existence of differential calculi in a precise way.

This line of inquiry has also been extended to encompass broader algebraic families, such as diffusion algebras \cite{RubianoReyes2024DSDifA} and general classes of skew PBW extensions \cite{RubianoReyes2024DSSBWR}. Notably, a comprehensive differential smoothness analysis of PBW bi-quadratic algebras has been recently completed, covering families with more complex relational structures \cite{RubianoReyes2025DSnBQ}. These findings provide significant insight into the interaction between algebraic presentations and homological conditions in the context of noncommutative differential geometry, particularly within the class of Artin–Schelter regular algebras.

Motivated by the growing interest in the differential smoothness of noncommutative graded algebras, our goal in this paper is to explore this property within the family of Sklyanin algebras. These algebras, originally arising in the context of quantum integrable systems, serve as fundamental examples of Artin–Schelter regular algebras and have been widely studied due to their rich algebraic and geometric structures. Despite their well-understood homological behavior and deep connections with elliptic curves, little is known about whether they admit differential calculi satisfying the conditions of differential smoothness in the sense of Brzezi{\'n}ski and Sitarz.

The diversity in the number of generators, the complexity of their defining relations, and the geometric content encoded in their point schemes make these algebras compelling candidates for a systematic investigation of differential smoothness. Moreover, since they exhibit regularity, finite global dimension, and noetherianity, key features that often suggest favorable homological behavior, it is natural to ask whether these properties are sufficient to guarantee the existence of a suitable differential graded structure. This work seeks to answer such questions by applying and adapting recent techniques developed in the study of skew PBW extensions and other noncommutative settings, now within the Sklyanin framework.

The structure of the paper is as follows. In Section~\ref{PreliminariesDifferentialsmoothnessofbi-quadraticalgebras}, we present the foundational concepts related to differential smoothness, along with the essential notation that will be used throughout the text. This section also includes a concise overview of the fundamental properties of Sklyanin algebras with three and four generators, ensuring that the exposition remains self-contained. Section~\ref{DifferentialandintegralcalculusSkly} is devoted to the main results of this work: Theorem~\ref{DS3Skly}, establishing that the non-degenerate three-dimensional Sklyanin algebras are differentially smooth, and Theorem~\ref{NODS4Skly}, which says that no four-dimensional Sklyanin algebra satisfies the conditions for differential smoothness.

Throughout the paper, $\mathbb{N}$ denotes the set of natural numbers including zero. The word ring means an associative ring with identity not necessarily commutative. $Z(R)$ denotes the center of the ring $R$. All vector spaces and algebras (always associative and with unit) are over a fixed field $\Bbbk$. $\Bbbk^{*}$ denotes the non-zero elements of $\Bbbk$. As usual, the symbols $\mathbb{R}$ and $\mathbb{C}$ denote the fields of real and complex numbers, respectively. 

\section{Preliminaries}\label{PreliminariesDifferentialsmoothnessofbi-quadraticalgebras}

We begin by reviewing the essential background on differential smoothness and Sklyanin algebras, which will serve as the foundation for the subsequent developments in this work.

\subsection{Differential smoothness of algebras}\label{DefinitionsandpreliminariesDSA}




We adopt the framework for differential smoothness introduced by Brzezi{\'n}ski and Sitarz in \cite[Section~2]{BrzezinskiSitarz2017}, see also \cite{Brzezinski2008, Brzezinski2014} for further details.

\begin{definition}[{\cite[Section~2.1]{BrzezinskiSitarz2017}}]
Let $A$ be an algebra over a field $\Bbbk$.
\begin{enumerate}
    \item[\rm (i)] A \emph{differential graded algebra} (DGA) is a non-negatively graded algebra $\Omega = \bigoplus_{n \in \mathbb{N}} \Omega^n$ equipped with an associative product $\wedge$ and a degree-one linear map $d: \Omega^\bullet \to \Omega^{\bullet +1}$ such that $d \circ d = 0$ and $d$ satisfies the graded Leibniz rule: for all homogeneous $\omega, \eta \in \Omega$,
    \[
    d(\omega \wedge \eta) = d(\omega) \wedge \eta + (-1)^{\deg(\omega)} \omega \wedge d(\eta).
    \]
    
    \item[\rm (ii)] A DGA $(\Omega, d)$ is called a \emph{differential calculus over an algebra} $A$ if $\Omega^0 = A$ and for all $n \in \mathbb{N}$, the module $\Omega^n$ is generated by $dA$ via wedge products as
    \[
    \Omega^n = A \cdot \underbrace{dA \wedge \dotsb \wedge dA}_{n\text{ times}} = \underbrace{dA \wedge \dotsb \wedge dA}_{n\text{ times}} \cdot A.
    \]
    This property is referred to as the \emph{density condition}. The differential calculus is said to be \emph{connected} if $\ker(d|_{\Omega^0}) = \Bbbk$.
    
    \item[\rm (iii)] A differential calculus $(\Omega, d)$ has \emph{dimension $n$} if $\Omega^n \neq 0$ and $\Omega^m = 0$ for all $m > n$. It is said to \emph{admit a volume form} if $\Omega^n$ is isomorphic to $A$ as both a left and a right $A$-module.
\end{enumerate}
\end{definition}

The condition that $\Omega^n A$ is isomorphic to $A$ as a right $A$-module implies the existence of an element $\omega \in \Omega^n A$ such that every $n$-form can be uniquely expressed as $\omega a$, for $a \in A$. In this case, $\omega$ is said to be a \emph{right $A$-module generator} of $\Omega^n A$. If, additionally, $\omega$ serves as a free generator from the left, i.e., every $n$-form can also be uniquely written as $a \omega$, then $\omega$ is called a \emph{volume form} on $\Omega A$.

Associated to a volume form $\omega$, there exists a canonical right $A$-module isomorphism $\pi_{\omega}: \Omega^n A \to A$, defined by
\begin{equation}\label{BrzezinskiSitarz20172.1}
    \pi_{\omega}(\omega a) = a, \quad \text{for all } a \in A.
\end{equation}

Furthermore, the left $A$-module structure of $\Omega^n A$ induces an algebra endomorphism $\nu_{\omega}: A \to A$ characterized by the relation
\begin{equation}\label{BrzezinskiSitarz20172.2}
    a \omega = \omega \nu_{\omega}(a), \quad \text{for all } a \in A.
\end{equation}
When $\omega$ is a volume form, the map $\nu_{\omega}$ is in fact an algebra automorphism of $A$.

We now recall the basic elements of the \emph{integral calculus} associated to a differential calculus, which can be regarded as its dual. For a detailed discussion, see \cite{Brzezinski2008, BrzezinskiElKaoutitLomp2010}.

Let $(\Omega A, d)$ be a differential calculus over $A$. Since each $\Omega^n A$ is an $A$-bimodule, one can define its right dual as
\[
\mathcal{I}_n A := \operatorname{Hom}_A(\Omega^n A, A),
\]
the space of right $A$-linear maps from $\Omega^n A$ to $A$. Each $\mathcal{I}_n A$ naturally inherits an $A$-bimodule structure given by:
\[
(a \cdot \phi \cdot b)(\omega) := a \, \phi(b\omega), \quad \text{for all } \phi \in \mathcal{I}_n A,\ \omega \in \Omega^n A,\ a, b \in A.
\]

The direct sum $\mathcal{I}A := \bigoplus_{n \geq 0} \mathcal{I}_n A$ becomes a right module over the DGA $\Omega A$, with action defined via the wedge product:
\begin{equation}\label{BrzezinskiSitarz2017}
    (\phi \cdot \omega)(\omega') := \phi(\omega \wedge \omega'), \quad \text{for } \phi \in \mathcal{I}_{n+m} A,\ \omega \in \Omega^n A,\ \omega' \in \Omega^m A.
\end{equation}

\begin{definition}[{\cite[Definition 2.1]{Brzezinski2008}}]
A \emph{divergence} (also known as a \emph{hom-connection}) on $A$ is a linear map $\nabla: \mathcal{I}_1 A \to A$ satisfying the relation
\begin{equation}\label{divergence-rule}
    \nabla(\phi \cdot a) = \nabla(\phi) a + \phi(da), \quad \text{for all } \phi \in \mathcal{I}_1 A,\ a \in A.
\end{equation}
\end{definition}

This operator $\nabla$ can be extended to a sequence of maps $\nabla_n: \mathcal{I}_{n+1} A \to \mathcal{I}_n A$ for each $n \geq 0$, via the formula:
\begin{equation}\label{divergence-extension}
    \nabla_n(\phi)(\omega) = \nabla(\phi \cdot \omega) + (-1)^{n+1} \phi(d\omega),
\end{equation}
for all $\phi \in \mathcal{I}_{n+1} A$ and $\omega \in \Omega^n A$.

Combining equations \eqref{divergence-rule} and \eqref{divergence-extension}, one obtains a graded Leibniz identity for $\nabla_n$:
\begin{equation}
    \nabla_n(\phi \cdot \omega) = \nabla_{m+n}(\phi) \cdot \omega + (-1)^{m+n} \phi \cdot d\omega,
\end{equation}
for all $\phi \in \mathcal{I}_{m+n+1} A$ and $\omega \in \Omega^m A$, as established in \cite[Lemma 3.2]{Brzezinski2008}. In particular, when $n = 0$ and $\mathcal{I}_0 A \cong A$, this reduces to the classical Leibniz rule.

\begin{definition}[{\cite[Definition 3.4]{Brzezinski2008}}]
Let $M$ be a right $A$-module. The composite map 
\[
F := \nabla_0 \circ \nabla_1: \operatorname{Hom}_A(\Omega^2 A, M) \to M
\]
is referred to as the \emph{curvature} of the hom-connection $(M, \nabla_0)$. The connection is said to be \emph{flat} if $F = 0$, which implies that $\nabla_n \circ \nabla_{n+1} = 0$ for all $n \geq 0$.
\end{definition}

The sequence of maps $\nabla_n$ defines a chain complex $(\mathcal{I}_\bullet A, \nabla)$, termed the \emph{complex of integral forms} on $A$. The canonical projection $\Lambda: A \to A / \operatorname{Im}(\nabla)$ is interpreted as an \emph{integral} over $A$.

Given a left $A$-module $X$ and an algebra automorphism $\nu: A \to A$, one defines the \emph{twisted} left $A$-module ${}^{\nu} X$ by modifying the scalar action as $a \cdot x := \nu(a) x$ for all $a \in A$ and $x \in X$.

The following concept formalizes the notion of a Hodge-type duality between differential and integral forms \cite[p.~112]{Brzezinski2015}.

\begin{definition}[{\cite[Definition 2.1]{BrzezinskiSitarz2017}}]
Let $(\Omega A, d)$ be an $n$-dimensional differential calculus. It is called \emph{integrable} if there exists a chain complex of integral forms $(\mathcal{I} A, \nabla)$ and an algebra automorphism $\nu: A \to A$, along with a family of $A$-bimodule isomorphisms
\[
\Theta_k: \Omega^k A \to {}^{\nu} \mathcal{I}_{n - k} A, \quad \text{for all } k = 0, \dotsc, n,
\]
such that the following diagram commutes:
\[
\begin{tikzcd}
A \arrow{r}{d} \arrow{d}{\Theta_0} & \Omega^{1} A \arrow{r}{d} \arrow{d}{\Theta_1} & \dotsb \arrow{r}{d} & \Omega^{n-1} A \arrow{r}{d} \arrow{d}{\Theta_{n-1}} & \Omega^n A \arrow{d}{\Theta_n} \\
{}^{\nu} \mathcal{I}_n A \arrow{r}{\nabla_{n-1}} & {}^{\nu} \mathcal{I}_{n-1} A \arrow{r}{\nabla_{n-2}} & \dotsb \arrow{r}{\nabla_1} & {}^{\nu} \mathcal{I}_1 A \arrow{r}{\nabla} & {}^{\nu} A
\end{tikzcd}
\]
The element $\omega := \Theta_n^{-1}(1) \in \Omega^n A$ is called the \emph{integrating volume form}.
\end{definition}

Several instances of algebras have been shown to support integrable differential calculi. For example, the matrix algebra $M_n(\mathbb{C})$ admits an $n$-dimensional calculus constructed from derivations, as originally developed by Dubois-Violette and collaborators \cite{DuboisViolette1988, DuboisVioletteKernerMadore1990}. Likewise, the quantum group $SU_q(2)$ supports a left-covariant three-dimensional calculus introduced by Woronowicz \cite{Woronowicz1987}, and the same holds for the corresponding restriction to the quantum standard sphere. Further developments and treatments can be found in \cite{BrzezinskiElKaoutitLomp2010}.

Interestingly, the following result provides a characterization of integrable calculi that does not require explicit knowledge of the associated integral forms. Instead, integrability can be verified through the existence of generating elements that permit a left-right decomposition of differential forms.

\begin{proposition}[{\cite[Theorem 2.2]{BrzezinskiSitarz2017}}]\label{integrableequiva}
Let $(\Omega A, d)$ be an $n$-dimensional differential calculus over an algebra $A$. The following statements are equivalent:
\begin{enumerate}
    \item [\rm (1)] The calculus $(\Omega A, d)$ is integrable.

    \item [\rm (2)] There exists an algebra automorphism $\nu$ of $A$ and $A$-bimodule isomorphisms $\Theta_k: \Omega^k A \to {}^{\nu} \mathcal{I}_{n-k} A$ for $k = 0, \dotsc, n$, such that for all $\omega' \in \Omega^k A$ and $\omega'' \in \Omega^m A$:
    \[
    \Theta_{k+m}(\omega' \wedge \omega'') = (-1)^{(n-1)m} \Theta_k(\omega') \cdot \omega''.
    \]

    \item [\rm (3)] There exists an automorphism $\nu$ of $A$ and an $A$-bimodule map $\vartheta: \Omega^n A \to {}^{\nu} A$ such that all left multiplication maps
    \[
    \ell^{k}_{\vartheta}: \Omega^k A \to \mathcal{I}_{n-k} A, \quad \omega' \mapsto \vartheta \cdot \omega',
    \]
    are bijections for each $k = 0, \dotsc, n$, where the product is as in \eqref{BrzezinskiSitarz2017}.

    \item [\rm (4)] The calculus $(\Omega A, d)$ possesses a volume form $\omega \in \Omega^n A$ such that the maps
    \[
    \ell^k_{\pi_{\omega}}: \Omega^k A \to \mathcal{I}_{n-k} A, \quad \omega' \mapsto \pi_{\omega} \cdot \omega',
    \]
    are bijective for all $k = 0, \dotsc, n-1$, with $\pi_{\omega}$ defined in \eqref{BrzezinskiSitarz20172.1}.
\end{enumerate}
\end{proposition}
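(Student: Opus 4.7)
The plan is to establish the four-way equivalence by a cyclic argument $(1) \Rightarrow (2) \Rightarrow (3) \Rightarrow (4) \Rightarrow (1)$, using the graded Leibniz rules for $d$ and for the divergence $\nabla$, and exploiting the right $\Omega A$-module structure on $\mathcal{I}A$ given in \eqref{BrzezinskiSitarz2017}.

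For $(1) \Rightarrow (2)$, I would fix the bimodule maps $\Theta_k$ supplied by integrability and prove the multiplicativity formula $\Theta_{k+m}(\omega' \wedge \omega'') = (-1)^{(n-1)m}\,\Theta_k(\omega')\cdot\omega''$ by induction on $m$. The base case $m = 0$ is just $A$-bilinearity of $\Theta_k$. For the inductive step, it suffices to treat generators $\omega'' = da$ (by the density condition), in which case the commutativity of the integrability diagram relates $\Theta_{k+1}(\omega' \wedge da)$ to $\nabla_{n-k-1}(\Theta_k(\omega'))$ applied appropriately, and the graded Leibniz identity for $\nabla$ converts this into $\Theta_k(\omega')\cdot da$ up to the sign $(-1)^{(n-1)}$. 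Iterating yields the claimed sign $(-1)^{(n-1)m}$.

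For $(2) \Rightarrow (3)$, take $\vartheta := \Theta_n\colon \Omega^n A \to {}^{\nu}\mathcal{I}_0 A \cong {}^{\nu}A$. Specialising (2) to $k+m = n$ shows that for each $\omega' \in \Omega^k A$ the element $\vartheta \cdot \omega' \in \mathcal{I}_{n-k}A$ coincides, up to the sign $(-1)^{(n-1)(n-k)}$, with $\Theta_k(\omega')$ under the natural identification $(\phi\cdot\omega')(\omega'') = \phi(\omega'\wedge\omega'')$. Bijectivity of $\Theta_k$ therefore forces bijectivity of $\ell^k_\vartheta$. For $(3) \Rightarrow (4)$, use that $\ell^n_\vartheta\colon \Omega^n A \to \mathcal{I}_0 A \cong A$ is an isomorphism to obtain the unique $\omega \in \Omega^n A$ with $\vartheta(\omega) = 1$; then bijectivity of $\ell^0_\vartheta\colon A \to \mathcal{I}_n A$ (sending $1 \mapsto \vartheta$) together with the right $A$-linearity of $\vartheta$ forces $\Omega^n A$ to be free of rank one as a right $A$-module with generator $\omega$, while the twist by $\nu$ supplies the left module generator; under these identifications $\pi_\omega = \vartheta$, so the maps $\ell^k_{\pi_\omega}$ are the same as $\ell^k_\vartheta$ and therefore bijective.

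For $(4) \Rightarrow (1)$, define $\nu := \nu_\omega$ via \eqref{BrzezinskiSitarz20172.2} and set $\Theta_k(\omega') := \pi_\omega \cdot \omega' \in {}^{\nu}\mathcal{I}_{n-k}A$; by hypothesis these are bimodule isomorphisms. The divergence $\nabla$ is then defined, degree by degree, as the unique map making the integrability square commute, which is possible because each $\Theta_k$ is an isomorphism. The nontrivial verification is that this $\nabla$ is a genuine hom-connection satisfying \eqref{divergence-rule} and that the higher maps $\nabla_k$ obtained from \eqref{divergence-extension} agree with those induced by the diagram; both reduce to manipulating the graded Leibniz rule for $d$ together with the multiplicativity of $\pi_\omega \cdot (-)$, and flatness $\nabla \circ \nabla = 0$ descends directly from $d \circ d = 0$.

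The main obstacle I anticipate is the careful sign bookkeeping and the verification in $(4)\Rightarrow(1)$ that the divergence constructed from the commuting-diagram condition genuinely satisfies the Leibniz rule \eqref{divergence-rule} with the correct twist by $\nu$; this requires showing that the failure of $\pi_\omega$ to be left $A$-linear is exactly absorbed by $\nu_\omega$, and that the formula \eqref{divergence-extension} for $\nabla_n$ is compatible with the recursive definition forced by the diagram. The other implications are essentially formal, but this step is where the geometric content, namely the Hodge-type duality between $d$ and $\nabla$, genuinely has to be recovered from the weaker data of a volume form.
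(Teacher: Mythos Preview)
The paper does not prove this proposition at all: it is quoted verbatim as \cite[Theorem 2.2]{BrzezinskiSitarz2017} in the preliminaries section and used as a black box in the proof of Theorem~\ref{DS3Skly}. There is therefore no in-paper argument to compare your sketch against.

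That said, your outline is a plausible route to the result and broadly matches how such equivalences are typically established. A few remarks on content. In $(2)\Rightarrow(3)$ you should be explicit that the action in \eqref{BrzezinskiSitarz2017} gives $(\vartheta\cdot\omega')(\omega'')=\vartheta(\omega'\wedge\omega'')$, so that $\ell^k_\vartheta(\omega')=(-1)^{(n-1)(n-k)}\Theta_k(\omega')$ really is an identity of elements of $\mathcal{I}_{n-k}A$ and not merely of their values; the twist by $\nu$ on the target side also needs to be tracked when you assert these are $A$-bimodule maps. In $(3)\Rightarrow(4)$, your argument that $\omega$ is a \emph{left} generator is a little compressed: you need that $\vartheta$ is an $A$-bimodule map into ${}^{\nu}A$, so $\vartheta(a\omega)=\nu(a)\vartheta(\omega)=\nu(a)$, and then injectivity of $\ell^0_\vartheta$ (equivalently, bijectivity of $\vartheta$ as a right $A$-module map) gives $a\omega=\omega\nu(a)$ and freeness on the left. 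Finally, in $(4)\Rightarrow(1)$ the point you flag is indeed the crux: one must check that the map $\nabla$ forced by commutativity of the bottom-right square satisfies \eqref{divergence-rule}, and then that the remaining squares commute with the $\nabla_n$ defined by \eqref{divergence-extension} rather than with some ad hoc maps. Both verifications are straightforward but require writing out $\pi_\omega(d(\omega'\wedge a))$ and using the graded Leibniz rule for $d$ together with $a\omega=\omega\nu_\omega(a)$.
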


In light of Proposition~\ref{integrableequiva}, a volume form $\omega \in \Omega^n A$ qualifies as an \emph{integrating form} precisely when it satisfies condition (4); see also \cite[Remark 2.3]{BrzezinskiSitarz2017}.

The most interesting cases of differential calculi are those where $\Omega^k A$ are finitely generated and projective right or left (or both) $A$-modules \cite{Brzezinski2011}.

\begin{proposition}\label{BrzezinskiSitarz2017Lemmas2.6and2.7}
\begin{enumerate}
\item [\rm (1)] {\rm \cite[Lemma 2.6]{BrzezinskiSitarz2017}} Let $(\Omega A, d)$ be an $n$-dimensional integrable differential calculus over $A$, and suppose it admits an integrating form $\omega$. Then the module $\Omega^k A$ is finitely generated and projective as a right $A$-module provided there exist finitely many elements $\omega_i \in \Omega^k A$ and $\overline{\omega}_i \in \Omega^{n-k} A$ such that for all $\omega' \in \Omega^k A$, we have:
\[
\omega' = \sum_i \omega_i\, \pi_\omega(\overline{\omega}_i \wedge \omega').
\]
    
\item [\rm (2)] {\rm \cite[Lemma 2.7]{BrzezinskiSitarz2017}} Suppose $(\Omega A, d)$ is an $n$-dimensional differential calculus over $A$ possessing a volume form $\omega$. If, for each $k = 1, \ldots, n-1$, there exist finitely many elements $\omega_i^k, \overline{\omega}_i^k \in \Omega^k A$ such that for all $\omega' \in \Omega^k A$:
\[
\omega' = \sum_i \omega_i^k\, \pi_\omega(\overline{\omega}_i^{n-k} \wedge \omega') = \sum_i \nu_\omega^{-1}(\pi_\omega(\omega' \wedge \omega_i^{n-k}))\, \overline{\omega}_i^k,
\]
then $\omega$ serves as an integrating form and each $\Omega^k A$ is finitely generated and projective as both a left and a right $A$-module. Here, $\pi_\omega$ and $\nu_\omega$ are defined by equations \eqref{BrzezinskiSitarz20172.1} and \eqref{BrzezinskiSitarz20172.2}, respectively.
\end{enumerate}
\end{proposition}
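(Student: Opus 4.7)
The first assertion is a direct application of the dual-basis criterion for projective modules. For each $i$ I would set $\phi_i: \Omega^k A \to A$, $\phi_i(\omega') := \pi_\omega(\overline{\omega}_i \wedge \omega')$. Since the wedge is right $A$-linear in its right slot and $\pi_\omega$ is a right $A$-module isomorphism by \eqref{BrzezinskiSitarz20172.1}, each $\phi_i$ belongs to $\mathcal{I}_k A = \operatorname{Hom}_A(\Omega^k A, A)$. The hypothesis then reads $\omega' = \sum_i \omega_i\, \phi_i(\omega')$, which is precisely a finite dual basis, so $\Omega^k A$ is finitely generated and projective as a right $A$-module.

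\textbf{Part (2).} Here my plan is to verify condition (4) of Proposition~\ref{integrableequiva}, namely that $\ell^k_{\pi_\omega} : \Omega^k A \to \mathcal{I}_{n-k} A$, which by \eqref{BrzezinskiSitarz2017} sends $\omega'$ to $\pi_\omega(\omega' \wedge -)$, is bijective for every $k = 0,\ldots,n-1$. A preliminary step is to deduce from \eqref{BrzezinskiSitarz20172.2} the twist identity
\[
\pi_\omega(a\eta) = \nu_\omega(a)\, \pi_\omega(\eta), \qquad a \in A,\ \eta \in \Omega^n A,
\]
which settles the case $k = 0$ at once because $\nu_\omega$ is an automorphism. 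For $1 \le k \le n-1$, injectivity of $\ell^k_{\pi_\omega}$ is handled by testing against the given family: if $\pi_\omega(\omega' \wedge \omega_i^{n-k}) = 0$ for every $i$, the second decomposition collapses $\omega'$ to $0$. For surjectivity, given $\phi \in \mathcal{I}_{n-k} A$, I would propose the preimage
\[
\omega' := \sum_i \nu_\omega^{-1}\!\bigl(\phi(\omega_i^{n-k})\bigr)\, \overline{\omega}_i^k \in \Omega^k A,
\]
and verify $\pi_\omega(\omega' \wedge \omega'') = \phi(\omega'')$ for arbitrary $\omega'' \in \Omega^{n-k} A$ by pushing each scalar $\nu_\omega^{-1}(\phi(\omega_i^{n-k}))$ through $\pi_\omega$ via the twist identity, then expanding $\omega''$ through the first decomposition in degree $n-k$, after which the right $A$-linearity of $\phi$ closes the loop.

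Once the maps $\ell^k_{\pi_\omega}$ are shown to be bijective, Proposition~\ref{integrableequiva} certifies that $\omega$ is an integrating form. The right-module projectivity of $\Omega^k A$ then follows by applying Part (1) to the pairs $(\omega_i^k, \overline{\omega}_i^{n-k})$. For the left-module side I would define $\psi_i : \Omega^k A \to A$ by $\psi_i(\omega') := \nu_\omega^{-1}\!\bigl(\pi_\omega(\omega' \wedge \omega_i^{n-k})\bigr)$ and check left $A$-linearity via
\[
\psi_i(a\omega') = \nu_\omega^{-1}\!\bigl(\nu_\omega(a)\, \pi_\omega(\omega' \wedge \omega_i^{n-k})\bigr) = a\, \psi_i(\omega'),
\]
after which the second decomposition is precisely a left dual basis. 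The main technical obstacle throughout is the careful bookkeeping of how scalars travel through $\pi_\omega$, which is right $A$-linear but left-twisted by $\nu_\omega$, and through the wedge product: the precise placement of $\nu_\omega^{\pm 1}$ in the proposed inverse and in the left-linearity check is what makes the argument cohere, and every injectivity or surjectivity step relies on combining the two decompositions at matched degrees $k$ and $n-k$.
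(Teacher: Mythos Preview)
The paper does not supply its own proof of this proposition: it is quoted verbatim from \cite[Lemmas~2.6 and~2.7]{BrzezinskiSitarz2017} as a preliminary tool and used later without further justification. There is therefore no in-paper argument to compare against.

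That said, your proposal is correct and is exactly the argument one expects (and is, in outline, the one given in the original reference). Part~(1) is indeed a one-line invocation of the dual-basis lemma once you observe that $\phi_i=\pi_\omega(\overline{\omega}_i\wedge -)$ lies in $\mathcal{I}_kA$. In Part~(2) your twist identity $\pi_\omega(a\eta)=\nu_\omega(a)\pi_\omega(\eta)$ is the essential bookkeeping device; your injectivity test against the $\omega_i^{n-k}$ and your explicit inverse $\omega'=\sum_i\nu_\omega^{-1}(\phi(\omega_i^{n-k}))\overline{\omega}_i^k$ are both right, and the closing step---expanding $\omega''$ via the first decomposition in degree $n-k$ and using right $A$-linearity of $\phi$---does indeed reproduce $\phi(\omega'')$. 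The left dual-basis check with $\psi_i$ is likewise clean. No gaps.
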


As noted by Brzeziński and Sitarz \cite[p.~421]{BrzezinskiSitarz2017}, to effectively connect the integrability of a differential calculus $(\Omega A, d)$ with the structure of the algebra $A$, it is essential to compare the dimension of the calculus with that of the algebra itself. Since many of the algebras under consideration are noncommutative deformations of coordinate rings of affine varieties, the most appropriate dimension concept is the {\em Gelfand–Kirillov dimension}, introduced in \cite{GelfandKirillov1966, GelfandKirillov1966b}. Specifically, for an affine $\Bbbk$-algebra $A$, the {\em Gelfand–Kirillov dimension} of $A$, denoted ${\rm GKdim}(A)$, is defined by:
\[
{\rm GKdim}(A) := \limsup_{n \to \infty} \frac{\log(\dim V^n)}{\log n},
\]
where $V$ is any finite-dimensional $\Bbbk$-subspace of $A$ generating $A$ as an algebra. This value is independent of the choice of $V$. In the case where $A$ is not affine, its Gelfand–Kirillov dimension is defined as the supremum of the dimensions of all its affine subalgebras.

An affine domain with GK-dimension zero is precisely a finite-dimensional division algebra over its center. If ${\rm GKdim}(A) = 1$, then $A$ is a finite module over its center and hence satisfies a polynomial identity. Thus, the Gelfand–Kirillov dimension serves as a noncommutative analog of Krull dimension, measuring how far $A$ is from being finite-dimensional. A thorough treatment of this topic can be found in \cite{KrauseLenagan2000}.

We now present the central concept underpinning this article:

\begin{definition}[{\cite[Definition 2.4]{BrzezinskiSitarz2017}}]\label{BrzezinskiSitarz2017Definition2.4}
Let $A$ be an affine algebra with integer Gelfand–Kirillov dimension $n$. Then $A$ is said to be {\em differentially smooth} if it admits a connected, integrable differential calculus $(\Omega A, d)$ of dimension $n$.
\end{definition}

As a consequence of Definition~\ref{BrzezinskiSitarz2017Definition2.4}, a differentially smooth algebra possesses a well-structured differential framework along with a rigorously defined notion of integration \cite[p.~2414]{BrzezinskiLomp2018}.

\begin{example}
As discussed in the Introduction, numerous instances of noncommutative algebras have been shown to satisfy the criteria for differential smoothness (see, for example, \cite{Brzezinski2015, BrzezinskiElKaoutitLomp2010, BrzezinskiLomp2018, BrzezinskiSitarz2017, Karacuha2015, KaracuhaLomp2014, ReyesSarmiento2022}). A classical example is the polynomial ring $\Bbbk[x_1, \dotsc, x_n]$, which has Gelfand–Kirillov dimension $n$, and its associated exterior algebra provides an $n$-dimensional integrable calculus. Hence, $\Bbbk[x_1, \dotsc, x_n]$ is differentially smooth. Furthermore, it was established in \cite{BrzezinskiElKaoutitLomp2010} that several quantum coordinate algebras, including those of the quantum group $SU_q(2)$, the standard Podleś sphere, and the quantum Manin plane, also possess differential smoothness.
\end{example}

\subsection{Three-dimensional Sklyanin algebras}\label{threeSkly}

Artin and Schelter classified the AS-regular algebras of global dimension $3$, generated in grade $1$ into algebras of type $A$, $B$, $H$, $S_1$, $S_1'$, $S_2$. Any member of the \textquotedblleft type $A$\textquotedblright \ family is now known as the { \em three-dimensional Sklyanin algebra}. The study of the Sklyanin algebra allowed a more complete classification of AS-regular algebras of dimension $3$, since for certain values of $p,q,r$ of the relations of the Sklyanin algebra (see Definition \ref{Def.3-dim SA}) it is AS-regular (when at least two of the parameters $p,q, r$ are nonzero and the equation $p^3 = q^3 = r^3$ does not hold); they are now known as {\em non-degenerate Sklyanin algebras}. Also it was interesting to wonder what happens for parameter values where the algebra is not AS-regular. In the case of the Sklyanin family or type $A$, the non-regular members are known as {\em degenerate Sklyanin algebras}. The non-degenerate case shares many properties with the {\em commutative polynomial ring} $\Bbbk[x,y,z]$ while the degenerate satisfies none of these properties. For more details about these algebras, see Walton \cite{Walton2009} and Iyudu and Shkarin \cite{ IyuduShkarin2017}.

\begin{definition}[{\cite[p. 382]{IyuduShkarin2017}}]\label{Def.3-dim SA}
The {\em three-dimensional Sklyanin algebras} over a field $\Bbbk$, denoted by $S(p, q, r)$, are generated by three noncommuting variables $x, y, z$ subject to three relations given by
\begin{equation}\label{rel3Skly}
    pyz + qzy + rx^2 =0, \ pzx + qxz + ry^2 =0, \ pxy + qyx + rz^2 =0, 
\end{equation}
    
for $(p,q,r)\in \mathbb{P}^2(\Bbbk)$.
\end{definition}

Sklyanin algebras can be classified into two different types, the \textit{non-degenerate Sklyanin algebras} and the \textit{degenerate Sklyanin algebras}. Let $\mathcal{D}$ be the subset of the projective plane $\mathbb{P}^2(\Bbbk)$ consisting of the 12 points given by
\[
\mathcal{D}=\{(1,0,0), (0,1,0),(0,0,1)\} \cup \{(p,q,r) \mid p^3=q^3=r^3 \}.
\]

\begin{definition}[{\cite[Definition 1.4]{Walton2009}}] \label{Def.Deg.nonDeg}  
The algebras $S(p,q,r)$ with $(p,q,r)\in \mathcal{D}$ are called the {\em degenerate three-dimensional Sklyanin algebras}. Such algebras are denoted by $S(p,q,r)$ or $S_{{\rm deg}}$ for short. When $(p,q,r)\in \mathbb{P}^2(\Bbbk)\ \backslash\  \mathcal{D}$ are called the {\em non-degenerate three- dimensional Sklyanin algebras}.
\end{definition}

Note that Sklyanin algebras are \textit{quadratic algebras} since their relations are homogeneous of degree two.

Note that multiplying $(p,q,r) \in  \Bbbk^3$ by a non-zero scalar the algebra $S(p,q,r)$ does not change. However, there are non-proportional triples of parameters which lead to isomorphic Sklyanin algebras (like {\em graded algebras}). The next three results present such isomorphism, important for classifying Sklyanin algebras.

\begin{proposition}[{\cite[Lemma 3.1]{IyuduShkarin2017}}]\label{lemma3.1}   
For every $(p,q,r)\in  \Bbbk^3$, the graded algebras $S(p,q,r)$ and $S(q,p, r)$ are isomorphic. 
\end{proposition}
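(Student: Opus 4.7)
The plan is to exhibit an explicit graded algebra isomorphism $\varphi\colon S(p,q,r) \to S(q,p,r)$ induced by a permutation of the generators. Since both algebras are quotients of the free algebra $\Bbbk\langle x,y,z\rangle$ (graded with $\deg x = \deg y = \deg z = 1$) by degree-two relations of the same shape, it suffices to find a bijection of the generating set that carries the three relations of $S(p,q,r)$ into the $\Bbbk$-span of the three relations of $S(q,p,r)$. Because we are swapping the roles of $p$ and $q$, which multiply the two ``reversed'' quadratic monomials $yz$ and $zy$ (and cyclically), the natural candidate is the involution that fixes $x$ and exchanges $y$ and $z$.

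Concretely, set $\varphi(x) = x$, $\varphi(y) = z$, $\varphi(z) = y$, and extend multiplicatively to $\Bbbk\langle x,y,z\rangle$. Applying $\varphi$ to the first relation of $S(p,q,r)$ gives
\[
\varphi(pyz + qzy + rx^{2}) \;=\; p\,zy + q\,yz + r\,x^{2} \;=\; q\,yz + p\,zy + r\,x^{2},
\]
which is precisely the first defining relation of $S(q,p,r)$. The second relation transforms as $\varphi(pzx + qxz + ry^{2}) = p\,yx + q\,xy + r\,z^{2}$, matching the third relation $q\,xy + p\,yx + r\,z^{2} = 0$ of $S(q,p,r)$, and the third relation transforms as $\varphi(pxy + qyx + rz^{2}) = p\,xz + q\,zx + r\,y^{2}$, matching the second relation $q\,zx + p\,xz + r\,y^{2} = 0$ of $S(q,p,r)$. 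Hence the ideal of relations of $S(p,q,r)$ is sent into the ideal of relations of $S(q,p,r)$, and $\varphi$ descends to a graded $\Bbbk$-algebra homomorphism on the quotients.

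Applying the symmetric argument (or simply observing that $\varphi^{2} = \mathrm{id}$ on generators) gives an inverse homomorphism $S(q,p,r) \to S(p,q,r)$ defined by the same formulas, so $\varphi$ is a graded isomorphism. No serious obstacle is anticipated: the entire content of the statement is the symmetry observation that exchanging $y$ and $z$ permutes the monomials in each relation so as to swap the coefficients $p$ and $q$. The only mild subtlety is bookkeeping: one of the three relations is preserved (up to the $p \leftrightarrow q$ swap), while the other two are exchanged with each other, which merely reorders the generators of the defining ideal and leaves the quotient unaffected.
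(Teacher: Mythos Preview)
Your argument is correct: the involution $x\mapsto x$, $y\mapsto z$, $z\mapsto y$ on $\Bbbk\langle x,y,z\rangle$ carries the relation ideal of $S(p,q,r)$ onto that of $S(q,p,r)$, and your bookkeeping of which relation maps to which is accurate. Note, however, that the present paper does not supply a proof of this proposition; it is quoted as \cite[Lemma~3.1]{IyuduShkarin2017}. The proof in that reference is exactly the one you give (the same transposition of generators), so there is nothing to compare.
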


\begin{proposition}[{\cite[Lemma 3.2]{IyuduShkarin2017}}]\label{lemma3.2} 
Assume that $(p,q,r)\in  \Bbbk^3$ and $\theta \in \Bbbk$ is such that $\theta^3=1$ and $\theta\neq 1$. Then the graded algebras $S(p,q,r)$ and $S(p,q,\theta r)$ are isomorphic. 
\end{proposition}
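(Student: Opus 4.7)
My plan is to construct an explicit graded algebra isomorphism $\varphi \colon S(p,q,r) \to S(p,q,\theta r)$ via a diagonal rescaling of the three generators. Both algebras share the same generating set $x,y,z$ placed in degree one, so I would look for nonzero scalars $\alpha, \beta, \gamma \in \Bbbk$ and set $\varphi(x) = \alpha x$, $\varphi(y) = \beta y$, $\varphi(z) = \gamma z$. This map extends uniquely to the free algebra on three generators, and it descends to the quotient $S(p,q,\theta r)$ if and only if it sends each defining relation of $S(p,q,r)$ into the ideal generated by the relations of $S(p,q,\theta r)$.

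Substituting into the first relation gives $\beta\gamma(pyz+qzy) + \alpha^{2} r\, x^{2}$, which after using the identity $pyz + qzy = -\theta r\, x^{2}$ valid in $S(p,q,\theta r)$ becomes $r(\alpha^{2} - \theta\beta\gamma)x^{2}$. The analogous computation for the other two relations produces the system
\[
\alpha^{2} = \theta\beta\gamma, \qquad \beta^{2} = \theta\alpha\gamma, \qquad \gamma^{2} = \theta\alpha\beta.
\]
Multiplying the three equations yields $(\alpha\beta\gamma)^{2} = \theta^{3}(\alpha\beta\gamma)^{2}$, so the hypothesis $\theta^{3} = 1$ is precisely what guarantees consistency. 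The specific choice $\alpha = \beta = 1$ and $\gamma = \theta^{2}$ satisfies all three equations and hence defines a well-defined graded algebra homomorphism $\varphi$. The case $r=0$ needs no discussion since then $\theta r = r$ and the two algebras coincide.

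To conclude, I would observe that running the same construction with $\theta^{-1}$ in place of $\theta$ produces a graded homomorphism $\psi \colon S(p,q,\theta r) \to S(p,q,r)$ given by $x \mapsto x$, $y \mapsto y$, $z \mapsto \theta z$. On the generators the compositions $\psi\circ\varphi$ and $\varphi\circ\psi$ are the identity, because $\theta\cdot\theta^{2} = 1$, and hence they are the identity on the whole algebra. This yields the desired graded isomorphism. The only real obstacle in the argument is isolating the correct rescaling; once the diagonal ansatz is in place, the cube-root-of-unity hypothesis enters in exactly one spot, as the compatibility condition for the three scalar equations, and everything else is a direct linear check.
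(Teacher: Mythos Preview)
Your argument is correct. The paper itself does not supply a proof of this proposition; it is quoted as a preliminary result from Iyudu and Shkarin with only a citation. Your diagonal rescaling $x\mapsto x$, $y\mapsto y$, $z\mapsto\theta^{2}z$ is exactly the kind of explicit change of generators one expects here, and your verification is clean: each defining relation of $S(p,q,r)$ is carried to a scalar multiple of the corresponding relation of $S(p,q,\theta r)$, with the condition $\theta^{3}=1$ entering precisely in closing up the third relation (via $\gamma^{2}=\theta^{4}=\theta=\theta\alpha\beta$). The explicit inverse and the observation that the $r=0$ case is trivial round out the argument with no gaps.
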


\begin{lemma}[{\cite[Lemma 3.3]{IyuduShkarin2017}}] \label{lemma3.3}
Assume that $(p,q,r)\in \Bbbk^3$ and $\theta \in \Bbbk$ is such that $\theta^3=1$ and $\theta\neq 1$. Then graded algebras $S(p,q,r)$ and $S(p',q', r')$ are isomorphic, where
\begin{equation*}
    p'=\theta^2p+\theta q+r, \hspace{0.5cm} q'=\theta p+\theta^2q+r, \hspace{0.5cm} r'=p+q+r.
\end{equation*}
\end{lemma}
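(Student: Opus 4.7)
The plan is to exhibit an explicit isomorphism induced by a linear change of generators inspired by the cube-root-of-unity structure implicit in the formulas for $p'$, $q'$, $r'$. The coefficient matrix of the assignment $(p,q,r) \mapsto (p',q',r')$ is Vandermonde-type at the three cube roots $1, \theta, \theta^{2}$, which suggests setting
\[
X = x + y + z, \qquad Y = \theta^{2}x + \theta y + z, \qquad Z = \theta x + \theta^{2}y + z,
\]
and considering the $\Bbbk$-algebra homomorphism $\psi: \Bbbk\langle X,Y,Z\rangle \to \Bbbk\langle x,y,z\rangle$ determined by these assignments.

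The central step is to verify that under $\psi$, each of the three defining relations of $S(p',q',r')$, namely $p'YZ+q'ZY+r'X^{2}$, $p'ZX+q'XZ+r'Y^{2}$, and $p'XY+q'YX+r'Z^{2}$, lies in the two-sided ideal generated by $R_{1}=pyz+qzy+rx^{2}$, $R_{2}=pzx+qxz+ry^{2}$, and $R_{3}=pxy+qyx+rz^{2}$. Expanding each image as a polynomial in $x,y,z$, one has nine monomial coefficients to track per relation, each a linear combination of $p,q,r$ weighted by powers of $\theta$. Systematic use of $1+\theta+\theta^{2}=0$ forces most cross terms to vanish, and the expected outcome is the three identities
\[
\psi(p'YZ+q'ZY+r'X^{2}) = 3(R_{1}+R_{2}+R_{3}),
\]
\[
\psi(p'ZX+q'XZ+r'Y^{2}) = 3\theta\, R_{1} + 3\theta^{2}\, R_{2} + 3\, R_{3},
\]
\[
\psi(p'XY+q'YX+r'Z^{2}) = 3\theta^{2}\, R_{1} + 3\theta\, R_{2} + 3\, R_{3}.
\]

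Assembled as a single matrix equation, the $3\times 3$ coefficient matrix on the right is $3$ times a Vandermonde matrix at $1,\theta,\theta^{2}$, hence invertible since the three cube roots of unity are distinct. Moreover, the linear map underlying $\psi$ on degree one has the invertible Vandermonde-type matrix with rows $(1,1,1)$, $(\theta^{2},\theta,1)$, $(\theta,\theta^{2},1)$, so $\psi$ extends to an isomorphism of the free algebras $\Bbbk\langle X,Y,Z\rangle$ and $\Bbbk\langle x,y,z\rangle$. These two facts together show that $\psi$ carries the relation ideal of $S(p',q',r')$ isomorphically onto that of $S(p,q,r)$, and therefore $\psi$ descends to the desired graded algebra isomorphism $\bar{\psi}: S(p',q',r') \to S(p,q,r)$. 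The main obstacle is the bookkeeping inside the verification above: one must carefully track the nine monomial coefficients in each transformed relation; packaging everything as a single Vandermonde matrix identity at $1,\theta,\theta^{2}$ both streamlines the computation and simultaneously certifies the ideal correspondence needed for the isomorphism.
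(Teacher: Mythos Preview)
The paper does not supply its own proof of this lemma; it is quoted directly from Iyudu--Shkarin, so there is no in-paper argument to compare against. Your approach is correct and is precisely the standard one: exhibit a linear change of generators whose coefficient matrix is the Vandermonde matrix at $1,\theta,\theta^{2}$, and verify that the defining relations transform into one another. The three displayed identities you predict do hold exactly as stated; a direct monomial-by-monomial expansion (using only $\theta^{3}=1$ and $1+\theta+\theta^{2}=0$) confirms each one, and the $3\times 3$ transition matrix
\[
3\begin{pmatrix}1&1&1\\ \theta&\theta^{2}&1\\ \theta^{2}&\theta&1\end{pmatrix}
\]
between the two sets of relations is invertible, so the relation ideals correspond. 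One point you should make explicit: the ubiquitous factor $3$ is harmless because the hypothesis $\theta^{3}=1$, $\theta\neq 1$ already forces $\operatorname{char}\Bbbk\neq 3$ (in characteristic $3$ the only cube root of unity is $1$). With that remark added, your argument is complete.
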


\begin{proposition}[{\cite[Theorem 1.2]{IyuduShkarin2017}}] \label{Theorem1.2}
    The algebra $S(p, q, r)$ is PBW if and only if at least one of the following conditions is satisfied:
    \begin{enumerate}
        \item [\rm (1)] $pr = qr = 0$;
        \item [\rm (2)] $p^3 = q^3 = r^3$;
        \item [\rm (3)] $(p + q)^3 + r^3 = 0$ and the equation $t^2 + t + 1$ is solvable in $\Bbbk$.
    \end{enumerate}
\end{proposition}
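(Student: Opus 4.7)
The plan is to recognize PBW-ness of the quadratic algebra $S(p,q,r)$ as the existence of a quadratic noncommutative Gröbner basis for its defining ideal, and then invoke the Bergman Diamond Lemma: one enumerates all degree-three overlap ambiguities among the leading monomials of the three relations and verifies that every associated S-polynomial reduces to zero via the quadratic rewriting rules. The parameter conditions (1)--(3) will emerge as precisely the cases in which these reductions close up.

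First, I would dispose of the degenerate parameter regimes falling under Condition~(1). If $r = 0$, the three relations become pure skew-commutations $pyz + qzy = 0$ and its cyclic analogs, so (after swapping $p \leftrightarrow q$ via Proposition~\ref{lemma3.1} if needed) the algebra is a quantum affine three-space, classically PBW in the basis $\{x^i y^j z^k\}$; if instead $p = q = 0$ and $r \neq 0$, the relations collapse to $x^2 = y^2 = z^2 = 0$, whose leading monomials have no overlaps at all. Next, assume $p, q, r$ are all nonzero and fix the deg-lex ordering with $x > y > z$. The leading monomials of the three relations are then $x^2, xy, xz$ with leading coefficients $r, p, q$, and the only degree-three overlap ambiguities are $x^3$, $x^2 y$, and $x^2 z$. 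For each, I would carry out the two competing reductions, subtract, and reduce the resulting polynomial via the three rewriting rules until only monomials avoiding $x^2, xy, xz$ as subwords remain. The vanishing of each fully reduced S-polynomial yields a polynomial equation in $(p,q,r)$.

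The next step is to analyze the resulting polynomial system. I expect it to factor into irreducible components corresponding exactly to the two families $\{p^3 = q^3 = r^3\}$ (Condition~(2)) and $\{(p+q)^3 + r^3 = 0\}$ (Condition~(3)), with the quadratic $t^2 + t + 1$ emerging as a resolvent whose solvability over $\Bbbk$ is required to express the coefficients that realize the latter family. The subcases where exactly one of $p, q$ vanishes (while $r \neq 0$) should be handled analogously but with the different leading-monomial pattern they induce, and the symmetries recorded in Propositions~\ref{lemma3.1}--\ref{lemma3.3} can be exploited to normalize $(p,q,r)$ to a convenient representative before computing reductions. For the converse implication, for each of (1)--(3) I would verify directly that the three ambiguities close.

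The main obstacle is the combinatorial weight of the S-polynomial calculations. Each difference of two competing reductions produces a degree-three polynomial with many terms, and nonnormal monomials must be repeatedly rewritten until stability, with the subtlety that intermediate rewrites may reintroduce reducible subwords. Extracting from the resulting mass of relations the clean dichotomy $\{p^3 = q^3 = r^3\} \cup \{(p+q)^3 + r^3 = 0\}$, together with the field-theoretic side condition on the solvability of $t^2+t+1$ in the latter family, requires careful factoring of polynomial expressions in $(p, q, r)$; this is the delicate arithmetic heart of the proof.
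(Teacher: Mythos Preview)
The paper does not prove this proposition; it is quoted from \cite{IyuduShkarin2017} as background in the preliminaries, with no argument supplied or subsequently used. There is therefore nothing in the paper to compare your proposal against.

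On its own merits: the Diamond Lemma strategy is indeed the method of \cite{IyuduShkarin2017}, and your identification of the leading monomials $x^2, xy, xz$ and the three overlaps $x^3, x^2y, x^2z$ (when $p,q,r\neq 0$) is correct. However, there is a genuine gap in the ``only if'' direction. The PBW property, as defined in \cite{IyuduShkarin2017}, asks whether \emph{some} linear basis of $S_1$ and \emph{some} ordering yield a quadratic Gr\"obner basis --- not merely the canonical generators $x,y,z$ with deg-lex. Your plan checks only the original presentation, supplemented by the discrete isomorphisms of Propositions~\ref{lemma3.1}--\ref{lemma3.2} and Lemma~\ref{lemma3.3}; these are far from exhausting the ${\rm GL}_3(\Bbbk)$-orbit of generating sets. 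Showing that \emph{no} other linear change of variables produces a confluent rewriting system when (1)--(3) all fail is the substantive part of the argument in \cite{IyuduShkarin2017}, and it is precisely here that the field condition on $t^2+t+1$ in (3) enters: the change of basis needed to realize that family requires a primitive cube root of unity in $\Bbbk$. Without controlling the full orbit of presentations, your outline would at best establish sufficiency of (1)--(3), not necessity.
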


Walton \cite{Walton2009} has shown that the degenerate Sklyanin algebras are nothing like the others: if $(p,q,r) \in \mathcal{D}$, then the algebra $S_{{\rm deg}}$  has infinite global dimension, is not Noetherian, has exponential growth, and has zero divisors, none of which happens when $(p,q,r)\in \mathbb{P}^2_\Bbbk\ \backslash\ \mathcal{D}$. However, Smith in \cite{Smith2012} proves a result for this type of Sklyanin algebras, they are monomial algebras, this fact will be important in the next section. 

\begin{proposition}[{\cite[Theorem 2.1]{Smith2012}}] \label{MonomialAlgebra} 
Let $\Bbbk$ be a field having a primitive cube root of unity, which implies that ${\rm char}(\Bbbk) \neq3$. Suppose $(p,q,r)\in \mathcal{D}$. Then
 \begin{equation*}
            S_{\rm deg}\cong\left\{ \begin{array}{lcc}
             A=\Bbbk\{ x,y,z\}\big/\langle x^2,y^2,z^2\rangle \ \text{if}~ p=q,\\
    \\ A'=\Bbbk\{ x,y,z\}\big/\langle xy,yz,zx\rangle \ \text{if} \ p\neq q.
             \end{array}
   \right.
\end{equation*}
\end{proposition}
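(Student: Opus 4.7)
The plan is to partition the twelve points of $\mathcal{D}$ into the three coordinate points $(1,0,0),(0,1,0),(0,0,1)$ and the nine points of the cube-root locus $\{(p,q,r):p^3=q^3=r^3\ne 0\}$, and then reduce every point of the second group to a coordinate point by combining the three isomorphism results in Propositions \ref{lemma3.1}, \ref{lemma3.2} and Lemma \ref{lemma3.3}. At the three coordinate points the defining relations \eqref{rel3Skly} already collapse to monomial ones, so the algebras $A$ and $A'$ emerge directly.

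I would begin with the coordinate points. Substituting $(p,q,r)=(0,0,1)$ into \eqref{rel3Skly} yields $x^2=y^2=z^2=0$, so $S(0,0,1)\cong A$. Substituting $(1,0,0)$ yields $yz=zx=xy=0$, so $S(1,0,0)\cong A'$. The remaining point $(0,1,0)$ gives $zy=xz=yx=0$, and Proposition \ref{lemma3.1} supplies $S(0,1,0)\cong S(1,0,0)\cong A'$. Note that $(0,0,1)$ is the unique coordinate point with $p=q$, which is consistent with the stated dichotomy.

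For the cube-root locus I rescale to $p=1$, so $q,r\in\{1,\theta,\theta^2\}$, where $\theta$ is a fixed primitive cube root of unity (available in $\Bbbk$ by hypothesis). Proposition \ref{lemma3.2} tells me that replacing $r$ by $\theta r$ yields an isomorphic algebra; applying it twice collapses the nine points into three subclasses indexed by $q$. Proposition \ref{lemma3.1}, applied to $(1,\theta,r)$ and followed by a projective rescaling, further identifies the $q=\theta$ and $q=\theta^2$ subclasses. Thus every point of the cube-root locus is isomorphic to one of the two representatives $(1,1,1)$ (the $p=q$ case) or $(1,\theta,\theta^2)$ (the $p\ne q$ case). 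Applying Lemma \ref{lemma3.3} with the primitive cube root $\theta$ and repeatedly using $1+\theta+\theta^2=0$ yields
\[
(1,1,1)\longmapsto (0,0,3),\qquad (1,\theta,\theta^2)\longmapsto (3\theta^2,0,0),
\]
which projectively are the coordinate points $(0,0,1)$ and $(1,0,0)$. Combining with the first step gives $S(p,q,r)\cong A$ whenever $p=q$ and $S(p,q,r)\cong A'$ whenever $p\ne q$.

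The main obstacle I foresee is the combinatorial bookkeeping needed to collapse the nine noncoordinate points to the two representatives $(1,1,1)$ and $(1,\theta,\theta^2)$ under the combined action of Propositions \ref{lemma3.1} and \ref{lemma3.2}. The transformation in Lemma \ref{lemma3.3} mixes all three parameters in a nontrivial way, so it is not a priori clear that the $p=q$ versus $p\ne q$ dichotomy is respected by the reduction; one must verify case by case, using $\theta^3=1$ and $1+\theta+\theta^2=0$, that the correct coordinate point emerges after the change of variables. No step is conceptually hard, but keeping track of projective scalings in $\mathbb{P}^2(\Bbbk)$ while chaining the three isomorphisms requires care.
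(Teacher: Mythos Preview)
The paper does not give its own proof of this proposition: it is quoted verbatim as \cite[Theorem~2.1]{Smith2012} and no argument is supplied. So there is nothing in the paper to compare your attempt against.

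That said, your strategy is correct and is in fact the natural one given the isomorphism results the paper records as Propositions~\ref{lemma3.1}, \ref{lemma3.2} and Lemma~\ref{lemma3.3}. Your computations at the coordinate points and at the two representatives $(1,1,1)$ and $(1,\theta,\theta^2)$ are right. One remark that dissolves the ``obstacle'' you flag at the end: the dichotomy $p=q$ versus $p\neq q$ is automatically preserved by every move you use. Projective rescaling and Proposition~\ref{lemma3.2} leave $(p,q)$ untouched; Proposition~\ref{lemma3.1} swaps $p$ and $q$; and for Lemma~\ref{lemma3.3} one has
\[
p'-q'=(\theta^{2}-\theta)(p-q),
\]
which is nonzero precisely when $p\neq q$ since $\theta$ is a primitive cube root of unity. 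Hence no case-by-case bookkeeping is required: once you have collapsed the nine cube-root points to two representatives and pushed those to coordinate points, the $p=q$ versus $p\neq q$ split is carried along for free.
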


\subsection{Four-dimensional Sklyanin algebras}\label{fourSkly}

Sklyanin's interest in these algebras arises from his study of Yang–Baxter matrices and the related {\em Quantum Inverse Scattering Method} (called the {\em Quantum Inverse Problem Method} \cite{Sklyanin1982, Sklyanin1983}) as these algebras provide the general solution to this method corresponding to Baxter's simplest examples of Yang-Baxter matrices.

\begin{definition}[{\cite[p. 746]{ChirvasituSmith2023}}] 
The families of graded algebras denoted by $A(\alpha,\beta,\gamma)$ and depending on a parameter $(\alpha,\beta,\gamma)\in\Bbbk^3$ are generated by three non-commutative variables  $x_0,x_1,x_2,x_3$ subject to three relations given by
\begin{align}\label{FDSAdefinition}
    x_0 x_1  - x_1 x_0 = &\  \alpha(x_2x_3 + x_3x_2), & x_0x_1 + x_1x_0 = &\ x_2x_3 - x_3x_2,\\
    x_0 x_2  - x_2 x_0 = &\  \beta(x_3x_1 + x_1x_3), & x_0x_2 + x_2x_0 = &\ x_3x_1 - x_1x_3,\\
    x_0 x_3  - x_3 x_0 = &\  \gamma(x_1x_2 + x_2x_1), & x_0x_3 + x_3x_0 = &\ x_1x_2 - x_2x_1.
\end{align}
\end{definition}

\begin{definition}[{\cite[p. 747]{ChirvasituSmith2023}}] 
Suppose $\alpha+\beta+\gamma+\alpha\beta\gamma=0$. We call $A(\alpha,\beta,\gamma)$ a {\em four-dimensional Sklyanin algebras} and denote it by $S(\alpha,\beta,\gamma)$. If , in addition, $\{\alpha,\beta,\gamma \} \cap \{0, \pm 1 \}=\emptyset $, we call $S(\alpha, \beta, \gamma)$ a \textit{non-degenerate four-dimensional Sklyanin algebra}. If $\{\alpha,\beta,\gamma \} \cap \{0, \pm 1 \}\neq \emptyset $, $S(\alpha, \beta, \gamma)$ is said to be a {\em degenerate four-dimensional Sklyanin algebra}. 
\end{definition}

The main result of Smith and Stafford \cite{SmithStafford1992} is the following fairly complete description of the structure of $S(\alpha,\beta,\gamma)$.

\begin{proposition}[{\cite[Theorem 0.3]{SmithStafford1992}}] \label{SmithStafford1992Theorem0.3}
Assume that $\{\alpha, \beta,\gamma\}$ is not equal to $\{-1,1,\gamma \}$, $\{\alpha,-1,1 \}$ or $\{1,\beta, -1 \}$. Then:
\begin{enumerate}
   \item [{\rm (1)}] $S(\alpha,\beta,\gamma)$ is a regular graded algebra of dimension 4.
    \item [{\rm (2)}] $H_S(t)=\frac{1}{(1-t)^4}$ is the Hilbert series of a commutative polynomial ring in 4 variables.
   \item [{\rm (3)}]   $S(\alpha,\beta,\gamma)$ is a Noetherian domain.
\end{enumerate}
For the other values of $\alpha,\beta, \gamma$, the ring $S(\alpha,\beta,\gamma)$ has many zero-divisors. 
\end{proposition}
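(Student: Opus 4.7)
The plan is to establish the structural properties of $S(\alpha,\beta,\gamma)$ by reducing it to the projective geometry of an elliptic curve, following the strategy pioneered by Sklyanin and later systematized by Artin--Tate--Van den Bergh for regular algebras. The six quadratic relations on four generators reproduce the numerology of a deformation of $\Bbbk[x_0,x_1,x_2,x_3]$, so $H_S(t)=(1-t)^{-4}$ is the expected Hilbert series; confirming it and controlling the kernel of the natural surjection from the free algebra will deliver the remaining claims essentially for free.

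First I would exhibit two linearly independent homogeneous central elements $\Omega_1,\Omega_2\in S(\alpha,\beta,\gamma)_2$, each a quadratic form in $x_0,\dots,x_3$ whose coefficients are rational functions of $\alpha,\beta,\gamma$. Centrality is a direct (if tedious) verification using the six defining relations together with the constraint $\alpha+\beta+\gamma+\alpha\beta\gamma=0$. The excluded triples $\{-1,1,\gamma\}$, $\{\alpha,-1,1\}$, and $\{1,\beta,-1\}$ are precisely those for which these quadratic forms either vanish or become linearly dependent, so the stated hypothesis is what guarantees $\Omega_1,\Omega_2$ are non-zero, linearly independent, and form a regular sequence in $S(\alpha,\beta,\gamma)$.

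Next I would form the quotient $B:=S(\alpha,\beta,\gamma)/(\Omega_1,\Omega_2)$ and identify it with a \emph{twisted homogeneous coordinate ring} $B(E,\mathcal{L},\sigma)$, where $E\subset\mathbb{P}^3$ is the smooth quartic elliptic curve cut out by $\Omega_1=\Omega_2=0$, $\mathcal{L}=\mathcal{O}_{\mathbb{P}^3}(1)|_E$ is the restricted hyperplane bundle of degree four, and $\sigma$ is translation by a distinguished point of $E$. By the Artin--Van den Bergh theory of twisted coordinate rings over projective curves, $B$ is a Noetherian domain; Riemann--Roch on $E$ gives
\[
H_B(t) = 1+\sum_{n\geq 1}4n\,t^n = \frac{(1+t)^2}{(1-t)^2}.
\]
Since $\Omega_1,\Omega_2$ form a central regular sequence of degree-two elements, the associated Koszul-type exact sequence yields
\[
H_S(t) = \frac{H_B(t)}{(1-t^2)^2} = \frac{(1+t)^2}{(1-t)^2(1-t)^2(1+t)^2} = \frac{1}{(1-t)^4},
\]
which is (2). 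Both Noetherianity and the domain property in (3) lift from $B$ to $S(\alpha,\beta,\gamma)$ through the central regular sequence $\Omega_1,\Omega_2$. For (1), AS-regularity of global dimension four then follows by producing the length-four Koszul bimodule resolution of the trivial module $\Bbbk$, whose shape is forced by the now-known Hilbert series and the essentially self-dual relation matrix. The final assertion on the excluded values is proved separately by exhibiting explicit zero-divisors in those cases, where $E$ degenerates into a union of lines or singular conics and the central elements collapse.

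The main obstacle is the first step: producing the two central quadratic elements and verifying their regularity under precisely the stated non-degeneracy hypotheses, together with the identification of $B$ with the twisted coordinate ring $B(E,\mathcal{L},\sigma)$. This identification is the conceptual heart of the argument, as it converts a noncommutative algebraic question into commutative geometry on an elliptic curve, but it requires a careful analysis of the point scheme of $S(\alpha,\beta,\gamma)$ and uses the relation $\alpha+\beta+\gamma+\alpha\beta\gamma=0$ in an essential way; the remaining Hilbert-series and lifting arguments are then largely formal.
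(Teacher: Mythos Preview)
The paper does not prove this proposition at all: it is quoted verbatim from Smith--Stafford \cite{SmithStafford1992} as background, and the only commentary is the paragraph immediately following it crediting Cherednik and Feigin--Odesskii with partial earlier results. There is therefore no in-paper argument to compare your proposal against.

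That said, your outline is a fair high-level summary of the actual Smith--Stafford strategy: two central quadrics $\Omega_1,\Omega_2$ (already in Sklyanin's original papers), the quotient identified with a twisted homogeneous coordinate ring $B(E,\mathcal{L},\sigma)$ of a quartic elliptic curve $E\subset\mathbb{P}^3$, and then lifting Noetherianity, the domain property, and the Hilbert series back through the regular sequence. Your Hilbert-series computation is correct. Two small points of inaccuracy: first, the excluded triples are not characterized by the vanishing or dependence of $\Omega_1,\Omega_2$ themselves (these exist and are independent more broadly); rather, they are the parameters at which the point scheme $E$ degenerates, so the identification with $B(E,\mathcal{L},\sigma)$ breaks down. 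Second, the order of deductions in Smith--Stafford is not quite ``domain lifts formally from $B$''; they first pin down the Hilbert series (hence a basis of ordered monomials), then establish Auslander regularity and the Cohen--Macaulay property, and deduce that $S$ is a domain from those homological facts rather than by a naive lifting argument. The graded lifting of the domain property through a central regular element does work in this setting, but it is not the route taken in the source.
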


For generic values of $\{\alpha,\beta, \gamma\}$, Proposition \ref{SmithStafford1992Theorem0.3}(2) has been proved by Cherednik \cite[Theorem 14]{Cherednik1986} by regarding $S(\alpha,\beta, \gamma)$ as a deformation of the graded analogue of $U(\mathfrak{so}_3)$; the enveloping algebra of the Lie algebra $\mathfrak{so}_3$, where  $\mathfrak{so}_3$ consists of real skew-symmetric matrices $3\times 3$. Some of part (1) of Proposition \ref{SmithStafford1992Theorem0.3} for generic values of $\{\alpha,\beta, \gamma\}$ has been proved by Feigin and Odesskii \cite{FeiginOdesskii1989, FeiginOdesskii1989(2)}. 

The non-degenerate Sklyanin algebras or Sklyanin algebras with values of  $\{\alpha, \beta,\gamma\}$ equal to $\{-1,1,\gamma \}$, $\{\alpha,-1,1 \}$ or $\{1,\beta, -1 \}$ may be parameterized by pairs $(E,\tau)$ consisting of an elliptic curve $E$ and a translation automorphism $\tau: E\to E$. Details about the geometric data of this algebras can be found in Smith and Stafford \cite{SmithStafford1992}.

The following two results establish necessary and sufficient conditions on the parameters $\alpha,\beta$ and $\gamma$ for these algebras to be isomorphic. Note that it is not necessary that $\alpha,\beta$ and $\gamma$ satisfy the equality $\alpha+\beta+\gamma+\alpha\beta\gamma=0$.

\begin{proposition}[{\cite[Lemma 2.3]{ChirvasituSmith2023}}] \label{Isomorfimos-4Dim}
There are algebra isomorphisms 
\begin{align*}
    S(\alpha, \beta, \gamma) &\cong S(\beta, \gamma, \alpha)\cong S(\gamma, \alpha, \beta)\\
    &\cong S(-\alpha,- \gamma,-\beta)\cong S(-\beta,-\alpha, -\gamma)\cong S(-\gamma,-\beta,-\alpha).
\end{align*}
\end{proposition}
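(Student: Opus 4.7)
The plan is to construct all six isomorphisms by specifying two elementary graded $\Bbbk$-algebra maps on the free algebra in $x_0,x_1,x_2,x_3$ and verifying that each sends the defining relations of the source Sklyanin algebra into those of the target; the remaining isomorphisms then follow by composition. It is convenient to write $[u,v]:=uv-vu$ and $\{u,v\}:=uv+vu$, so that the defining relations of $S(\alpha,\beta,\gamma)$ become
\[
[x_0,x_1]=\alpha\{x_2,x_3\},\quad [x_0,x_2]=\beta\{x_3,x_1\},\quad [x_0,x_3]=\gamma\{x_1,x_2\},
\]
together with $\{x_0,x_1\}=[x_2,x_3]$, $\{x_0,x_2\}=[x_3,x_1]$ and $\{x_0,x_3\}=[x_1,x_2]$.

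First, I would consider the cyclic permutation $\sigma$ fixing $x_0$ and sending $x_1\mapsto x_2$, $x_2\mapsto x_3$, $x_3\mapsto x_1$. The six relations above are cyclically symmetric in the index triple $(1,2,3)$ with the parameter triple $(\alpha,\beta,\gamma)$ simultaneously cycling, so $\sigma$ carries each defining relation of $S(\alpha,\beta,\gamma)$ into a defining relation of $S(\gamma,\alpha,\beta)$; the map induced by the inverse permutation is its two-sided inverse, giving an isomorphism $S(\alpha,\beta,\gamma)\cong S(\gamma,\alpha,\beta)$. Iterating yields $S(\alpha,\beta,\gamma)\cong S(\beta,\gamma,\alpha)$.

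Next, I would introduce the \emph{sign-swap} map $\tau$ fixing $x_0$, sending $x_1\mapsto -x_1$, and swapping $x_2\leftrightarrow x_3$. A relation-by-relation check shows that under $\tau$ each parameter relation acquires a single minus sign, while the indices reshuffle so that the coefficient of $[x_0,x_1]$ in the image becomes $-\alpha$, that of $[x_0,x_2]$ becomes $-\gamma$, and that of $[x_0,x_3]$ becomes $-\beta$. In the parameter-free relations the minus sign produced by $x_1\mapsto -x_1$ is cancelled by the sign from swapping $[x_2,x_3]$ into $-[x_2,x_3]$, and those three relations are merely permuted among themselves. Thus $\tau$ induces an isomorphism $S(\alpha,\beta,\gamma)\cong S(-\alpha,-\gamma,-\beta)$, and composing with $\sigma$ and $\sigma^2$ supplies the remaining two isomorphisms $S(\alpha,\beta,\gamma)\cong S(-\beta,-\alpha,-\gamma)$ and $S(\alpha,\beta,\gamma)\cong S(-\gamma,-\beta,-\alpha)$ in the chain.

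The argument is purely bookkeeping; no serious obstacle is expected. The only subtlety is the interplay between the sign coming from $x_1\mapsto -x_1$ and the sign introduced by the transposition $x_2\leftrightarrow x_3$: since $\{x_2,x_3\}$ is symmetric under that swap while $[x_2,x_3]$ is skew, the signs conspire to negate the three parameter relations while leaving the three parameter-free relations intact, which is exactly the symmetry $(\alpha,\beta,\gamma)\mapsto(-\alpha,-\gamma,-\beta)$ recorded in the statement.
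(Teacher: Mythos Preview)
The paper does not supply its own proof of this proposition; it is quoted verbatim from \cite[Lemma~2.3]{ChirvasituSmith2023} as background on four-dimensional Sklyanin algebras, so there is nothing in the present paper to compare your argument against.

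That said, your proof is correct. The cyclic permutation $\sigma$ of $(x_1,x_2,x_3)$ and the sign-swap $\tau\colon x_1\mapsto -x_1$, $x_2\leftrightarrow x_3$ are precisely the linear substitutions one uses to establish these isomorphisms, and your bookkeeping of how the six relations transform under each map is accurate. In particular, your observation that $\{x_2,x_3\}$ is symmetric while $[x_2,x_3]$ is antisymmetric under the transposition, so that the parameter-free relations survive $\tau$ unchanged while the parameter relations pick up the sign pattern $(\alpha,\beta,\gamma)\mapsto(-\alpha,-\gamma,-\beta)$, is exactly the point. This is also the argument given in the cited source, so your approach coincides with the standard one.
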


\begin{proposition}[{\cite[Proposition 2.4]{ChirvasituSmith2023}}] Suppose $\alpha\beta\gamma\neq 0$ and $\alpha'\beta'\gamma'\neq 0$. Then $S(\alpha,\beta,\gamma)\cong S(\alpha',\beta',\gamma')$ as graded $\Bbbk$-algebras if and only if $(\alpha',\beta',\gamma')$ is a cyclic permutation of either $(\alpha,\beta,\gamma)$ or $(-\alpha,-\gamma,-\beta)$.
\end{proposition}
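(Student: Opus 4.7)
The ``if'' direction is immediate from Proposition \ref{Isomorfimos-4Dim}, so the plan focuses on the converse. Any graded algebra isomorphism $\phi\colon S(\alpha,\beta,\gamma)\xrightarrow{\sim} S(\alpha',\beta',\gamma')$ is determined by its restriction $\phi_{1}$ to the degree-one subspace, and this linear map must carry the six-dimensional space of quadratic relations of the source bijectively onto that of the target. Thus the problem is equivalent to classifying the $\mathrm{GL}(V)$-orbit of the relation subspace in $V\otimes V$.

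My approach is to invoke the geometric classification of Smith and Stafford: outside the excluded locus of Proposition \ref{SmithStafford1992Theorem0.3}, the algebra $S(\alpha,\beta,\gamma)$ is determined by its point scheme, namely an elliptic curve $E\subset\mathbb{P}^{3}$ (the base locus of the pencil of quadrics coming from two canonical central degree-two elements $\Omega_{1},\Omega_{2}$), together with a translation automorphism $\tau\colon E\to E$ and four marked points $e_{0},e_{1},e_{2},e_{3}\in\mathbb{P}^{3}$. Because any graded isomorphism preserves the centre and the grading, it carries this entire data equivariantly, so the pair $(E,\tau)$ together with its labelled marked points (up to compatible relabelling) is a complete invariant of the algebra.

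The task then reduces to an explicit combinatorial computation of the fibre of the map sending $(\alpha,\beta,\gamma)$ to its geometric datum. I would extract a numerical invariant such as the $j$-invariant of $E$ and the translation parameter, reading them off from the four singular members of the pencil $\Bbbk\Omega_{1}+\Bbbk\Omega_{2}$ and their cross-ratio. The cyclic permutations listed in Proposition \ref{Isomorfimos-4Dim} arise naturally from relabelling the indices $\{1,2,3\}$ of the generators $x_{1},x_{2},x_{3}$, while the involution $(\alpha,\beta,\gamma)\mapsto(-\alpha,-\gamma,-\beta)$ corresponds to a distinguished geometric involution on the pair $(E,\tau)$. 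One then verifies that any two triples sharing the same geometric invariants are related by one of these six transformations.

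The main obstacle is ruling out unforeseen isomorphisms: a priori a non-obvious linear automorphism of the degree-one subspace could preserve the quadratic relation space without respecting the labelling of the generators. Establishing that no such sporadic symmetries exist amounts to proving injectivity of the map from $\{(\alpha,\beta,\gamma):\alpha\beta\gamma\neq 0\}/\Sigma$ into the moduli of pairs $(E,\tau)$, where $\Sigma$ is the explicit order-six symmetry group generated by the six isomorphisms of Proposition \ref{Isomorfimos-4Dim}. I expect this to require a careful analysis of how the four marked points $e_{i}$ can be permuted by a graded automorphism of $\mathbb{P}^{3}$ compatible with the translation $\tau$, together with the observation that the hypothesis $\alpha\beta\gamma\neq 0$ prevents the point scheme from degenerating to a configuration with additional symmetries.
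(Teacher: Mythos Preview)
The paper does not prove this proposition. It is quoted verbatim from \cite[Proposition~2.4]{ChirvasituSmith2023} as background material in Section~\ref{fourSkly}, and plays no role in the proofs of the paper's own results (Theorems~\ref{DS3Skly} and~\ref{NODS4Skly}). There is therefore no argument in the paper to compare your proposal against.

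As for the proposal on its own terms: it is an outline rather than a proof. The strategy---reduce to the geometric datum $(E,\tau)$ with its marked points and compute the fibre of the parameter-to-geometry map---is reasonable and is in the spirit of the original reference, but the step you yourself label ``the main obstacle'' is exactly the content of the proposition. You have identified that one must show the stabilizer of the geometric data, acting on parameter triples, is no larger than the order-six group $\Sigma$ generated by the isomorphisms of Proposition~\ref{Isomorfimos-4Dim}, and you have correctly anticipated that the hypothesis $\alpha\beta\gamma\neq 0$ is needed to avoid extra symmetries; but you have not carried out the computation that establishes this. Phrases such as ``I would extract'' and ``I expect this to require'' make clear that the argument is not complete. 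To finish, you would need either the explicit calculation of the automorphism group of the pair $(E\subset\mathbb{P}^3,\tau)$ in terms of $(\alpha,\beta,\gamma)$, or a direct linear-algebra argument classifying all $T\in\mathrm{GL}_4$ that carry the six-dimensional relation space of $S(\alpha,\beta,\gamma)$ onto that of $S(\alpha',\beta',\gamma')$.
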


\section{Differential and integral calculus}\label{DifferentialandintegralcalculusSkly}

This section contains the important results of the paper.
\begin{theorem}\label{DS3Skly} 
The non-degenerate three-dimensional Sklyanin algebras $S$ are differentially smooth.
\end{theorem}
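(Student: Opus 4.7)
The plan is to construct an explicit $3$-dimensional connected integrable differential calculus on $S=S(p,q,r)$ and then verify the hypotheses of Definition~\ref{BrzezinskiSitarz2017Definition2.4}. Since the non-degeneracy hypothesis guarantees that $S$ is AS-regular of global dimension~$3$ with the Hilbert series of $\Bbbk[x,y,z]$ (by Artin-Tate-Van den Bergh), one has ${\rm GKdim}(S)=3$, so the target calculus must indeed have dimension~$3$, and connectedness will be automatic once the calculus is constructed so that $\ker(d|_S) = \Bbbk$.

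For the construction of $\Omega^\bullet S$ I would follow the strategy employed for skew PBW extensions and bi-quadratic PBW algebras in the recent works of Rubiano and Reyes cited in the Introduction. I would declare $\Omega^1 S$ to be the free right $S$-module with basis $\{dx,dy,dz\}$ and encode the left $S$-action via commutation rules of the form
\[
a\cdot du = \sum_{v\in\{x,y,z\}} dv\cdot \tau_{v,u}(a),\qquad a\in S,\ u\in\{x,y,z\},
\]
where each $\tau_{v,u}\colon S\to S$ arises from a twisted multi-derivation structure on the generating subspace and is chosen so that applying $d$ to each of the three defining relations \eqref{rel3Skly} produces zero in $\Omega^1 S$. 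Wedging antisymmetrically then gives $\Omega^2 S$ (expected free of rank~$3$ on $\{dy\wedge dz,\,dz\wedge dx,\,dx\wedge dy\}$) and $\Omega^3 S$ (free of rank~$1$, generated by $\omega_0 := dx\wedge dy\wedge dz$), with compatibility rechecked at each wedge degree.

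The next step is to show that $\omega_0$ is a volume form by exhibiting the algebra automorphism $\nu_{\omega_0}$ of $S$ defined by $a\,\omega_0 = \omega_0\,\nu_{\omega_0}(a)$. Because non-degenerate three-dimensional Sklyanin algebras are Calabi-Yau of dimension~$3$, $\nu_{\omega_0}$ is expected to reduce to the identity or to a uniform scalar rescaling of the generators, which is a positive indication that the construction will close up. To finally establish integrability I would apply part~(4) of Proposition~\ref{integrableequiva}: using the freeness of $\Omega^k S$ for $k=1,2$, I produce the dual collections $\{\omega_i^k,\,\overline{\omega}_i^{n-k}\}$ required by Proposition~\ref{BrzezinskiSitarz2017Lemmas2.6and2.7}(2) via the natural wedge pairing into $\Omega^3 S\cong S$. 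This should yield both the bijectivity of the maps $\ell^k_{\pi_{\omega_0}}$ and, a posteriori, that each $\Omega^k S$ is finitely generated and projective on both sides.

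The principal obstacle is the construction step, namely pinning down the twisting maps $\tau_{v,u}$ so that $d$ descends consistently to all of $S$. Unlike the quantum or skew polynomial setting, the Sklyanin relations genuinely mix cross terms $yz,\,zy$ with the square $x^2$ (and cyclically), so pure diagonal twists $\tau_{v,u}=\delta_{v,u}\sigma_u$ cannot suffice, and the consistency equations along the three defining relations must be solved explicitly, possibly with nontrivial off-diagonal corrections. A case analysis tied to the PBW classification of Proposition~\ref{Theorem1.2} (restricted to the non-degenerate locus) seems unavoidable: in the generic subcase the calculus should essentially be read off from the Koszul dual of $S$, a Frobenius algebra with Hilbert series $1+3t+3t^2+t^3$, while the non-PBW subcases within the non-degenerate locus will require a direct verification.
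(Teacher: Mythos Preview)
Your proposal shares the paper's overall architecture --- build $\Omega^1 S$ as a free right $S$-module on $dx,dy,dz$, extend to degrees $2$ and $3$, exhibit the volume form $\omega=dx\wedge dy\wedge dz$, and then invoke Proposition~\ref{BrzezinskiSitarz2017Lemmas2.6and2.7}(2) together with Proposition~\ref{integrableequiva} --- but it stops short of being a proof: the decisive step, namely actually writing down the twisting maps and checking their compatibility with the relations~\eqref{rel3Skly}, is expressly deferred as the ``principal obstacle.'' The paper does not defer this step; it writes the maps out and verifies the compatibilities directly. Without that concrete construction your outline does not establish the theorem, and the suggested detours through the PBW case analysis of Proposition~\ref{Theorem1.2} or through the Koszul dual remain speculative.

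More pointedly, your assertion that ``pure diagonal twists $\tau_{v,u}=\delta_{v,u}\sigma_u$ cannot suffice'' is precisely the opposite of what the paper does. The paper's construction is diagonal: it defines maps $\nu_x,\nu_y,\nu_z$ on generators by scalar multiplications (for instance $\nu_x(x)=-x$, $\nu_x(y)=-pq^{-1}y$, $\nu_x(z)=-p^{-1}qz$, and cyclically for $\nu_y,\nu_z$), imposes the bimodule rule $a\,du=du\,\nu_u(a)$, and then checks that applying $d$ via Leibniz to each of the three relations in~\eqref{rel3Skly} yields zero. If you carry this out for, say, $pyz+qzy+rx^2=0$, you will see that with these particular scalars the $dy$-, $dz$- and $dx$-coefficients vanish separately; the mixing of the cross terms with the square that worried you is absorbed by the signs and the $pq^{-1}$ factors. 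Thus no off-diagonal corrections, no case split, and no appeal to Koszul duality are used; the paper's route is exactly the one you dismissed at the outset.
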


\begin{proof}
Consider the following automorphisms:
\begin{align}
   \nu_{x}(x) = &\ -x, & \nu_{x}(y) = &\ -pq^{-1}y, & \nu_{x}(z) = &\ -p^{-1}qz, \label{Auto1} \\ 
    \nu_{y}(x) = &\ -p^{-1}qx, & \nu_{y}(y) = &\ -y, &  \nu_{y}(z) = &\ -pq^{-1}z, \label{Auto2} \\
    \nu_{z}(x) = &\ -pq^{-1}x, & \nu_z(y) = &\ -p^{-1}qy, & \nu_{z}(z) = &\ -z. \label{Auto3}
\end{align}
The maps $\nu_{x}$, $\nu_{y}$, and $\nu_{z}$ define an algebra homomorphism on $S$ precisely when their action on the generators $x$, $y$, and $z$ is compatible with the defining relations {\rm (}\ref{rel3Skly}{\rm )}. A straightforward computation confirms that this condition always holds. Moreover, since each map scales the generators by scalar multiples, the corresponding automorphisms commute pairwise.

Let $\Omega^{1}S$ be the free right $S$-module of rank three generated by the symbols $dx$, $dy$, and $dz$. We define a left $S$-module structure on $\Omega^{1}S$ by specifying, for all $p \in S$,
\begin{align}
    pdx = &\ dx \nu_{x}(p), \notag \\ \quad pdy = &\ dy\nu_{y}(p),\ {\rm and} \notag \\
    pdz = &\ dz\nu_{z}(p) \label{relrightmod}.
\end{align}

The relations in $\Omega^{1}S$ are given by 
\begin{align}
xdx = &\ -dx x, \notag \\
xdy = &\ -dyp^{-1}qx, \notag \\
xdz = &\ -dzpq^{-1}x, \label{rel1} \\
ydx = &\ -dxpq^{-1}y, \notag \\  
ydy = &\ -dyy, \notag \\
ydz = &\ -dzp^{-1}qy, \label{rel2} \\
zdx = &\ -dxp^{-1}qz, \notag \\
zdy = &\ -dypq^{-1}z, \ {\rm and} \notag \\
zdz = &\ -dzz. \label{rel3} 
\end{align}
    
We want to extend the correspondences 
\begin{equation*}
x \mapsto d x, \quad y \mapsto d y \quad {\rm and} \quad z\mapsto d z
\end{equation*} 

to a map $d: S \to \Omega^{1} S$ satisfying the Leibniz's rule. This is possible if it is compatible with the nontrivial relations {\rm (}\ref{rel3Skly}{\rm )}, i.e. if the equalities
\begin{align*}
        pdyz+pydz + qdzy+qzdy + rdx+rxdx  &\ = 0, \\
        pdzx+pzdx + qdxz +qxdz+ rdyy+rydy &\ = 0,\ {\rm and} \\
        pdxy+pxdy + qdyx+qydx + rdzz+rzdz &\ = 0
\end{align*}
hold.

Define $\Bbbk$-linear maps 
\begin{equation*}
\partial_{x}, \partial_{y}, \partial_{z}: S \rightarrow S
\end{equation*}

such that
\begin{align*}
    d(a)=dx\partial_{x}(a)+dy\partial_{y}(a)+dz\partial_{z}(a), \quad {\rm for\ all} \ a \in S.
\end{align*}

Since $dx$, $dy$, and $dz$ form a free generating set of the right $S$-module $\Omega^1S$, the above definitions yield well-defined left $S$-module actions. It is worth noting that $d(a) = 0$ holds if and only if $\partial_x(a) = \partial_y(a) = \partial_z(a) = 0$. Employing the relations in {\rm (}\ref{relrightmod}{\rm )} along with the definitions of the automorphisms $\nu_x$, $\nu_y$, and $\nu_z$, one obtains the following expressions:
\begin{align*}
\partial_{x}(x^k y^l z^s) &= (-1)^k\,k\,x^{k-1} y^l z^s,  \\
\partial_{y}(x^k y^l z^s) &= (-1)^{k+l}\,l\,p^{-k} x^k y^{l-1} z^s,  \\
\partial_{z}(x^k y^l z^s) &= (-1)^{k+l+s}\,s\,p^{k-l} q^{-k} x^k y^l z^{s-1}.
\end{align*}

Consequently, $d(a) = 0$ if and only if $a$ is a scalar multiple of the identity element. This establishes that the differential graded algebra $(\Omega S, d)$ is connected, where $\Omega S$ decomposes as $\Omega^0 S \oplus \Omega^1 S \oplus \Omega^2 S$.

Extending the differential $d$ to higher-degree forms in a manner consistent with relations {\rm (}\ref{rel1}{\rm )}, {\rm (}\ref{rel2}{\rm )}, and {\rm (}\ref{rel3}{\rm )}, we derive the following commutation rules in $\Omega^2 S$:
\begin{align}
dy \wedge dx &= pq^{-1}\,dx \wedge dy, \\
dz \wedge dx &= p^{-1}q\,dx \wedge dz, \quad \text{and} \\
dz \wedge dy &= pq^{-1}\,dy \wedge dz. \label{relsecondD1}
\end{align}

Given that the automorphisms $\nu_x$, $\nu_y$, and $\nu_z$ mutually commute, no additional constraints arise in $\Omega^2 S$ beyond those already stated. As a result, the space of 2-forms admits the decomposition
\begin{align*}
\Omega^2 S = dx \wedge dy\, S \oplus dx \wedge dz\, S \oplus dy \wedge dz\, S.
\end{align*}

Since $\Omega^3 S = \omega S \cong S$ both as a right and left $S$-module, where $\omega := dx \wedge dy \wedge dz$ and the associated automorphism is given by $\nu_{\omega} = \nu_{x} \circ \nu_{y} \circ \nu_{z}$, it follows that $\omega$ constitutes a volume form on $S$.
 From Proposition \ref{BrzezinskiSitarz2017Lemmas2.6and2.7} (2) we get that $\omega$ is an integral form by setting
\begin{align*}
\omega_1^1  = &\ \bar{\omega}_1^1 = dx, \\  
\omega_2^1 = &\ \bar{\omega}_2^1 = dy, \\
\omega_3^1 = &\ \bar{\omega}_3^1 = dz, \\
    \omega_1^2 = &\ dy\wedge dz, \\
    \omega_2^2 = &\ p^{-1}qdx\wedge dz, \\ 
    \omega_3^2 = &\ dx_1\wedge dx_3, \\
    \bar{\omega}_1^2 = &\ dy\wedge dz, \\
    \bar{\omega}_2^2 = &\ p^{-1}qdx\wedge dz, \quad {\rm and} \\ 
    \bar{\omega}_3^2 = &\ dx\wedge dy.
\end{align*}

By Proposition \ref{BrzezinskiSitarz2017Lemmas2.6and2.7} (2), we consider the expression $\omega' := dxa + dyb + dzc$ with $a, b, c \in \Bbbk$ and obtain the equality
\begin{align*}
    \sum_{i=1}^{3}\omega_{i}^{1}\pi_{\omega}(\bar{\omega}_i^{2}\wedge \omega') = &\ dx\pi_{\omega}(ady\wedge dz\wedge dx) \\
    & + dy\pi_{\omega}(p^{-1}qbdx\wedge dz\wedge dy) \\
    & + dz\pi_{\omega}(cdx\wedge dy\wedge dz) \\
    = &\ dxa+dyb+dzc \\
    = &\ \omega'.
\end{align*}

On the other hand, if $\omega'' := dx\wedge dya+dx\wedge dz b+dy\wedge dz c$ where $a,b,c \in \Bbbk$, we get that
\begin{align*}
\sum_{i=1}^{3}\omega_{i}^{2}\pi_{\omega}(\bar{\omega}_i^{1}\wedge \omega'') = &\  dy\wedge dz\pi_{\omega}(cdx\wedge dy \wedge dz) \\
&\ +pq^{-1}dx\wedge dz\pi_{\omega}(bdy\wedge dx\wedge dz) \\
    &\ + dx\wedge dy\pi_{\omega}(adz \wedge dx \wedge dy) \\ 
    = &\ dx\wedge dya+dx\wedge dzb+dy\wedge dz \\
    = &\ \omega''.
\end{align*}

As previously established, each graded component of the differential calculus is generated by forms of the type $\omega_i^j$ and $\bar{\omega}_i^{3-j}$, for $j = 1, 2$ and $i = 1, 2, 3$. Therefore, Proposition \ref{BrzezinskiSitarz2017Lemmas2.6and2.7} (2) ensures that $\omega$ serves as an integral form. In view of Proposition \ref{integrableequiva}, this implies that $(\Omega S, d)$ defines an integrable differential calculus of dimension $3$. Furthermore, since the Gelfand–Kirillov dimension of $S$ satisfies ${\rm GKdim}(S) = 3$ whenever $S$ is non-degenerate, it follows that $S$ is differentially smooth.
\end{proof}

\begin{remark}
In contrast with the non-degenerate case, degenerate Sklyanin algebras do not qualify as candidates for differential smoothness. As shown by Walton \cite[Theorem I.6 and Corollary III.9]{WaltonThesis2011}, these algebras exhibit exponential growth and consequently have infinite Gelfand–Kirillov dimension. Since differential smoothness requires the existence of a finite-dimensional integrable differential calculus whose dimension matches the Gelfand–Kirillov dimension, the infinite growth of degenerate Sklyanin algebras rules out the possibility of satisfying this property.
\end{remark}

\begin{theorem}\label{NODS4Skly} 
The four-dimensional Sklyanin algebras $S$ are not differentially smooth.
\end{theorem}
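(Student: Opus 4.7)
The plan is to argue by contradiction, paralleling as closely as possible the constructive strategy used in the proof of Theorem \ref{DS3Skly}. I would assume that a four-dimensional Sklyanin algebra $S = S(\alpha, \beta, \gamma)$ is differentially smooth, so that it admits a connected integrable differential calculus $(\Omega S, d)$ of dimension $4 = {\rm GKdim}(S)$. Since $x_0, x_1, x_2, x_3$ generate $S$, the density condition forces $\Omega^1 S$ to be generated by $dx_0, dx_1, dx_2, dx_3$. The natural candidate, and essentially the only one compatible with the Hilbert series $H_S(t) = (1-t)^{-4}$ of Proposition \ref{SmithStafford1992Theorem0.3}(2) and with the existence of a top-degree volume form, is to take $\Omega^1 S$ to be free of rank $4$ as a right $S$-module, with left action controlled by graded algebra automorphisms $\nu_{x_i}: S \to S$ such that $a \cdot dx_i = dx_i \cdot \nu_{x_i}(a)$ for every $a \in S$ and every $i = 0, 1, 2, 3$.

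The decisive step is to apply $d$ to the defining relations \eqref{FDSAdefinition}, use the Leibniz rule, and move all generators to the right of the $dx_i$ via the twisting $x_j \cdot dx_i = dx_i \cdot \nu_{x_i}(x_j)$, then collect coefficients of the free basis $\{dx_0, dx_1, dx_2, dx_3\}$. Focusing on the pair
\begin{align*}
x_0 x_1 - x_1 x_0 &= \alpha(x_2 x_3 + x_3 x_2), \\
x_0 x_1 + x_1 x_0 &= x_2 x_3 - x_3 x_2,
\end{align*}
the commutator relation produces $\nu_{x_0}(x_1) = x_1$ and $\nu_{x_1}(x_0) = x_0$, together with $\alpha(x_3 + \nu_{x_2}(x_3)) = 0$ and $\alpha(x_2 + \nu_{x_3}(x_2)) = 0$. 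The anticommutator relation, processed identically, simultaneously demands $\nu_{x_0}(x_1) = -x_1$ and $\nu_{x_1}(x_0) = -x_0$. In any characteristic different from $2$ this forces the impossible identity $2 x_0 = 2 x_1 = 0$ in the domain $S$, so no family of compatible automorphisms $\nu_{x_i}$ can exist. Analogous contradictions arise from each of the remaining two mixed pairs in \eqref{FDSAdefinition}, showing that the natural candidate for a $4$-dimensional differential calculus breaks down for every choice of $(\alpha, \beta, \gamma)$.

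The main obstacle is to confirm that this obstruction is intrinsic, rather than an artifact of the particular bimodule presentation of $\Omega^1 S$. I would handle it by arguing that, because $S$ is Artin–Schelter regular of global dimension four with Hilbert series $(1-t)^{-4}$ (Proposition \ref{SmithStafford1992Theorem0.3}), any connected integrable $4$-dimensional differential calculus on $S$ equipped with a volume form in $\Omega^4 S$ must realize $\Omega^1 S$ with the Hilbert series of the free right $S$-module of rank four, and its bimodule structure is then governed by graded automorphisms $\nu_{x_i}$ of exactly the form used above; any proper quotient or alternative twist would perturb the Hilbert series of $\Omega^{\bullet} S$ and prevent the top-degree isomorphism $\Omega^4 S \cong S$ needed for integrability. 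Since the commutator/anticommutator incompatibility persists in every such presentation, one concludes that no four-dimensional Sklyanin algebra admits a connected integrable differential calculus of dimension $4$, so none is differentially smooth.
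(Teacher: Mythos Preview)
Your proposal takes essentially the same approach as the paper: both apply the Leibniz rule to the pair of relations $x_0 x_1 \mp x_1 x_0 = \cdots$, read off the coefficient of $dx_0$ under a twisted-automorphism bimodule structure $a\,dx_i = dx_i\,\nu_{x_i}(a)$, and obtain the contradictory constraints $\nu_{x_0}(x_1) = x_1$ and $\nu_{x_0}(x_1) = -x_1$. The only difference is that you explicitly flag and attempt to justify (via Hilbert-series considerations) the assumption that $\Omega^1 S$ must be free of rank four with left action governed by graded automorphisms, whereas the paper simply invokes this structure without further comment.
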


\begin{proof}
Suppose that $S$ has a first order differential calculus $\Omega S$ with $d: S \rightarrow \Omega^1S$ a derivation. By applying $d$ to $x_0 x_1  - x_1 x_0 =   \alpha(x_2x_3 + x_3x_2)$ we get that 
\[
d(x_0 x_1  - x_1 x_0) =  d(\alpha(x_2x_3 + x_3x_2)).
\]

Since $d$ is $\Bbbk$-linear, using the Leibniz's rule it follows that
\[
dx_0 x_1+x_0dx_1  - dx_1 x_0-x_1dx_0 =  \alpha dx_2x_3 +\alpha x_2dx_3 + \alpha dx_3x_2 + \alpha x_3dx_2.
\]

Using Property \ref{BrzezinskiSitarz20172.2}, the action of the module is written using automorphisms $\nu_{x_0}$, $\nu_{x_1}$ , $\nu_{x_2}$ and $\nu_{x_3}$
{\small{\[
dx_0 x_1+dx_1\nu_{x_1}(x_0)  - dx_1 x_0-dx_0\nu_{x_0}(x_1) =  \alpha dx_2x_3 +\alpha dx_3\nu_{x_3}(x_2) + \alpha dx_3x_2 + \alpha dx_2\nu_{x_2}(x_3).
\]}}

The terms that multiply the different differentials are put together to obtain
\[
dx_0(x_1-\nu_{x_0}(x_1))+dx_1(\nu_{x_1}(x_0) - x_0) -dx_2(\alpha x_3+\alpha\nu_{x_2}(x_3))-dx_3(\alpha\nu_{x_3}(x_2)+\alpha x_2) = 0.
\]
 
Since the differentials are generating elements for $\Omega^1S$ and all of them are equal to zero, the elements that multiply them are also zero. We obtain, in particular for $dx_0$, that 
\begin{align*}
    x_1-\nu_{x_0}(x_1) \Rightarrow \nu_{x_0}(x_1)=x_1.
\end{align*}
This implies that the automorphism $\nu_{x_0}$, corresponding to the differential calculus behavior, must leave the generator $x_1$ invariant.

Now, considering the relation $x_0x_1 + x_1x_0 = x_2x_3 - x_3x_2$, in a manner entirely analogous to the preceding argument, it follows that
\[
dx_0(x_1+\nu_{x_0}(x_1))+dx_1(\nu_{x_1}(x_0) + x_0) -dx_2( x_3+\nu_{x_2}(x_3))+dx_3(\nu_{x_3}(x_2)+ x_2) = 0.
\]
Again, focusing on the coefficient of $dx_0$, it must vanish; that is
\begin{align*}
    x_1+\nu_{x_0}(x_1) \Rightarrow \nu_{x_0}(x_1)=-x_1.
\end{align*}
It follows that the automorphism $\nu_{x_0}$, when acting on $x_1$, would yield two distinct images, which is a contradiction. Thus, no such first-order differential calculus $\Omega^1 S$ exists, and consequently, $S$ is not differentially smooth.
\end{proof}

It is important to emphasize that the previous theorem applies uniformly to both degenerate and non-degenerate four-dimensional Sklyanin algebras. Indeed, the proof does not rely on any assumption regarding the Gelfand–Kirillov dimension of the algebra, but rather on structural obstructions arising from the failure to construct a compatible differential calculus satisfying the integrability and connectedness conditions. Therefore, regardless of whether the algebra has finite or infinite Gelfand–Kirillov dimension, none of the four-generator Sklyanin algebras are differentially smooth.

\section{Future Work}

The results presented in this paper suggest several promising directions for future investigation. One natural extension would be to consider the differential smoothness of other families of Artin–Schelter regular algebras, particularly those arising from quantum homogeneous spaces or from generalized Sklyanin-type constructions in higher dimensions. It would be especially interesting to explore whether the techniques used here (namely, the explicit construction of differential calculi and analysis of volume forms) can be adapted to settings involving twisted Calabi–Yau algebras or more general PBW deformations.

Another compelling direction is the interaction between homological smoothness and differential smoothness in the context of noncommutative projective geometry. Given that some non-differentially smooth Sklyanin algebras are nonetheless homologically smooth, understanding the precise algebraic and geometric mechanisms behind this divergence could shed light on deeper categorical and deformation-theoretic properties. Furthermore, it would be worthwhile to investigate how the absence of differential smoothness influences representation theory, particularly the existence of point modules and their moduli spaces.

\section{Declarations}

The three authors were supported by Dirección Ciencias Básicas, Universidad ECCI- sede Bogotá.

All authors declare that they have no conflicts of interest.

The authors did not receive any funding while working on this paper.

\end{document}